\long\def\ignore#1{\par}
\newcommand\s{\par\smallskip}
\newcommand\n{\noindent}
\newcommand\sn{\s\noindent}
\renewcommand\b{\par\bigskip}
\renewcommand\newline{\hfil\break}
\newcommand{\qua}{\quad\kern-4truept}
\newcommand\stru{{\vrule height8pt depth4pt width0pt}}
\newcommand{\etab}{\boldsymbol{\eta}}
\newcommand\tiletab{\tilde\etab}
\newcommand{\qedsym}{$\blacktriangleleft$}
\renewcommand\qed{\nopagebreak\hspace*{1pt}\hfill\qedsym\par\s}
\renewenvironment{proof}[1][\proofname]{\par
  \normalfont
  \topsep6\p@\@plus6\p@ \trivlist
  \item[\hskip\labelsep\scshape
    #1\@addpunct{:}]\ignorespaces
}{%
  \qed\endtrivlist
}
\theoremstyle{plain}
\newtheoremstyle{bookremark}{\topsep}{\topsep}{\normalfont}{}%
{\normalfont\itshape}{:}{.5em}{\thmname{#1}\thmnumber{#2}\thmnote{(\itshape{#3})}}
\newtheoremstyle{harmonicplain}{\topsep}{\topsep}{\itshape}{}{\bfseries}%
{.}{.5em }{\thmname{#1}\thmnumber{ #2}\thmnote{ (\itshape{#3})}}
\theoremstyle{harmonicplain}%
\newtheorem*{theorem}{\itshape Theorem}%[section]%
\newtheorem*{lemma}{\itshape Lemma}
\newtheorem{stheorem}[subsection]{\  \itshape Theorem}%\addtocounter{footnote}{1}}%[section]%
\theoremstyle{bookremark}%
\newtheorem*{remark}{ Remark}%\quad
\theoremstyle{bookdefinition}%
\newtheorem*{definition}{\normalfont\scshape Definition}
\renewcommand\subsection{\@startsection{subsection}{2}{\z@}%
                                     {-3.25ex\@plus -1ex \@minus -.2ex}%
                                     {-1.5ex \@plus .2ex}%  
                                     {\bfseries}}
\newcommand{\abs}[1]{\lvert #1 \rvert}
\newcommand{\norm}[1]{\ensuremath{\lVert #1 \rVert}}
\newcommand{\set}[2]{\ensuremath{\{#1 \,\mathbf{:}\, #2\}}}%\colon
\newcommand{\bigbra}[1]{\bigl( #1 \bigr)}
\newcommand\enumaba[1]{\begin{enumerate}[\itshape\bfseries a. ]#1\end{enumerate}\par}
\newcommand\osta[1]{O\left(#1\right)}
\newcommand\ostaB[1]{O\Bigl(#1\Bigr)}
\newcommand{\var}{\operatorname{\text{var}}}
\newcommand{\Lip}{\operatorname{\text{Lip}}\nolimits}
\newcommand{\vsave}[1][6]{\par\vskip -#1pt\noindent}
\newcommand\save{\par\vskip -19pt}
\newcommand\inv{^{-1}}
\newcommand{\half}{\ensuremath{\frac{1}{2}}}
\newcommand{\tenth}{\ensuremath{\frac{1}{10}}}
\newcommand\osc{{\footnotesize\textsf {osc}}}
\def\example{\smallskip\noindent{\textbf{Example.}\hspace{.69em} }}
\def\ve{\varepsilon}
\def\vt{\vartheta}
\def\vp{\varphi}
\newcommand\ce{{\mathcal E}}
\newcommand\cm{{\mathcal M}}
\newcommand\cl{{\mathcal L}}
\newcommand\ch{{\mathcal H}}
\newcommand{\bbr} {\mathbb{R}}
\newcommand{\bbn}{\mathbb{N}}
\newcommand{\bbz}{\mathbb{Z}}
\newcommand{\restr}[1]{\,\lower.53ex \hbox{\vrule depth1pt width0.4pt height10pt
             \lower.1ex\hbox{$\,#1$}}}
\newcommand{\hdim}[1]{\mathop{\ch\!\text{-}\mathrm{dim}}#1}
\newcommand{\lmdim}[1]{\mathop{\cl\cm\!\text{-}\mathrm{dim}}#1}
\newcommand{\mdim}[1]{\mathop{\cm\!\text{-}\mathrm{dim}}#1}
\newcommand{\bfit}[1]{\bfseries\itshape{#1}\normalfont}
\newcommand{\aba}{{\bfit {a.}\ }} %{{\tbf a.}}
\newcommand{\bab}{{\bfit {b.}\ }}
\newcommand{\cac}{{\bfit {c.}\ }}
    \title{Restrictions of continuous functions} % 
    \author{Jean-Pierre Kahane and Yitzhak Katznelson}
\date{}%June 22, 2005}
\begin{document}
\thispagestyle{empty}

    \maketitle

\section*{Introduction}
Given a continuous real-valued function on $[0,1]$, and a closed
subset $E\subset [0,1]$ we denote by $f\restr E$ the restriction
of $f$ to $E$, that is, the function defined only on $E$ that takes
the same values as $f$ at every point of $E$. The restriction
$f\restr E$ will typically be ``better behaved'' than $f$.
It may have bounded variation when $f$ doesn't,
 it may have
a better modulus of continuity than $f$, it may be monotone when
$f$ is not, etc. All this clearly depends on $f$ and on $E$, and the
questions that we discuss here are about the existence, for every
$f$, or every $f$ in some class,   of 
``substantial'' sets  $E$ such that $f\restr E$ has bounded 
total variation, is monotone, or satisfies a given modulus of continuity.
The notion of ``substantial'' that we use is that of either
   Hausdorff or  Minkowski dimensions,
both are defined below.

Here is an outline of the paper. We refer
to theorems by the subsection  in which they are stated.

Section 2  deals with restrictions of bounded variation.
Theorem 2.1, part {\bfit I} states that every continuous 
real-valued function  on $[0,1]$ has bounded variation on some set
of Hausdorff dimension 1/2. 
Part {\bfit {II}} of the theorem  shows that this is optimal by  
constructing 
an appropriate lacunary series whose sum
has unbounded variation on every closed set
of Minkowski dimension bigger than $1/2$ (and hence on every
set of Hausdorff dimension bigger than $1/2$). 
Analogous results 
for $\bbr^{d}$-valued functions are proved in subsection 2.6.

Section 3 deals with restrictions that satisfy a
H\"older condition with parameter $\alpha\in (0,1)$.
It was known, though never stated
in this form,  that for every 
continuous function $f$ on $[0,1]$
and every $\alpha\in (0,1)$ there exists  sets $E$ of 
Hausdorff dimension
$1-\alpha$ such that $f\restr E$ satisfies a H\"older $\alpha$
condition (see subsection 3.1). 
Extending the methods used in the proof of theorem 2.1, we  give an elementary proof of the result (theorem 3.1 part {\bfit I})
and  show, in part {\bfit {II}}, that it is optimal by constructing,
as in the proof of part {\bfit {II}} of theorem 2.1,  an 
approriate lacunary series whose sum is
a function for which nothing better can be done.

In section 4, theorem 4.1, we construct continuos functions $f$ 
that satisfy a H\"older-$\alpha$ condition for all $\alpha<1$
and yet
 if  $f\restr E$ is Lipschitz or monotone, then $E$ 
 is ``arbitrarily thin''.
Theorem 4.2 deals with monotone restrictions of continuous
functions.

 In section 5  we consider the relative advantage of restrictions
of functions that satisfy  various  H\"older smoothness conditions,
give  partial results and
 point out some open problems.

By including the short section 1, we try to make the exposition  
elementary  and self-contained, requiring no  background 
material beyond what should be ``commonly known''.

\subsection*{Notations and terminology. }{\qua}

\emph{A modulus of continuity}   
 is a monotone increasing continuous concave function  $\omega(t)$ 
  on $[0,1]$,  such that $\omega(0)=0$.

Given a real-valued function $f$ on $[0,1]$, a closed set $E$, and
a modulus of continuity $\omega$, we write
$f\restr E\in C_{\omega}$ if for all $t\in E$ there exist
$\delta=\delta(t)>0$ and $C=C(t)$ such that if $\tau\in E$ and
$\abs{t-\tau}\le\delta(t)$ then
$\abs{f(t)-f(\tau)}\le C(t)\omega(t-\tau)$.

For $\omega(t)=t^{\alpha}$, $0<\alpha\le 1$ we write
$\Lip_{\alpha}$ instead of $C_{\omega}$. $\Lip_{1}$ is 
usually referred to as the \emph{Lipschitz class}, 
while $\Lip_{\alpha}$, $0<\alpha<1$, 
as the H\"older $\alpha$ class.\footnote{Some classics refer to
the H\"older classes as \emph{the Lipschitz $\alpha$ classes}
---hence the notation.} 

The \emph{(total) variation}, $\var(E,f)$, of a function $f$ on a closed
set $E$, is defined by
\begin{equation*}
\var(E,f)=\sup \sum\abs{f(x_{j+1})-f(x_{j})},
\end{equation*}
the supremum is for all finite monotone increasing sequences 
$\{x_{j}\}\subset E$.
We  write $f\in BV(E)$ if $\var(E,f)<\infty$.

The oscillation of $g$ on a set $E$ is
\begin{equation}
\osc(g,E)=\max_{x\in E}g(x)-\min_{x\in E}g(x).
\end{equation}

Finally, if $E\subset[0,1]$ is closed, we denote by $\abs{E}$
the (Lebesgue) measure of $E$.

\section{Dimensions }

\subsection{(Lower) Minkowski  dimension. }{\qquad}\\

\vsave[15]
\begin{definition} Let $s>0$.
An \emph{$s$-separated set of length $m$} is
a set $J=\{x_{j}\}_{j=1}^{m}$
in $[0,1]$ such that $\abs{x_{k}-x_{j}}>s$ for $j\ne k$. 
\end{definition}

For a subset $E\subset [0,1]$, denote  by $L_{n}(E)$  the smallest number 
of intervals of length $n^{-1}$ needed to cover  $E$. 
Denote by
 $L_{n}^{*}$  the largest number $L$
such that $E$ contains some $n\inv$-separated sequence of length 
$L$.
\begin{lemma}
\begin{equation}\label{mink1}
L_{n}(E)\le L^{*}_{2n}(E)\le L_{2n}(E). 
\end{equation}  

\end{lemma}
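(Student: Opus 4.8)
The plan is to establish the two inequalities of \eqref{mink1} separately, each by a short counting argument; the only thing one has to keep track of is the strict inequality in the definition of an $s$-separated set.

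For the left-hand inequality $L_{n}(E)\le L^{*}_{2n}(E)$, I would take an $(2n)\inv$-separated sequence $\{x_{j}\}_{j=1}^{L}\subset E$ of maximal length, so that $L=L^{*}_{2n}(E)$. Since the sequence is of maximal length it is in particular maximal under inclusion, so no point of $E$ can be appended to it: for every $y\in E$ there is some index $j$ with $\abs{y-x_{j}}\le (2n)\inv$. Equivalently, the $L$ intervals $[x_{j}-(2n)\inv,\,x_{j}+(2n)\inv]$, each of length $n\inv$, cover $E$. Hence $E$ is covered by $L$ intervals of length $n\inv$, which gives $L_{n}(E)\le L=L^{*}_{2n}(E)$.

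For the right-hand inequality $L^{*}_{2n}(E)\le L_{2n}(E)$, I would take any $(2n)\inv$-separated sequence $\{x_{j}\}_{j=1}^{L}\subset E$ and any cover of $E$ by $L_{2n}(E)$ intervals of length $(2n)\inv$. Two points lying in a common interval of length $(2n)\inv$ are at distance at most $(2n)\inv$, hence cannot both belong to an $(2n)\inv$-separated sequence; so each of the covering intervals contains at most one of the points $x_{j}$. Since the $x_{j}$ all lie in the union of these intervals, $L\le L_{2n}(E)$, and taking the supremum over all such separated sequences yields $L^{*}_{2n}(E)\le L_{2n}(E)$.

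There is no real obstacle here: all three quantities are finite because $E\subset[0,1]$, and the argument is entirely elementary. The single delicate point is the use of the strict inequality $\abs{x_{k}-x_{j}}>s$ in the definition of separation — it is precisely this that makes both the ``at most one point per interval of length $s$'' step and the maximality step go through without any boundary ambiguity.
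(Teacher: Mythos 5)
Your proof is correct and follows essentially the same route as the paper: the first inequality comes from covering $E$ by intervals of length $n\inv$ centered at the points of a maximal $(2n)\inv$-separated sequence, and the second from the observation that an interval of length $(2n)\inv$ can contain at most one point of a $(2n)\inv$-separated set. The only difference is that you spell out the maximality and strict-inequality details, which the paper leaves implicit.
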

\begin{proof} A pair of points whose distance is $>(2n)\inv$ cannot belong
to the same interval of length $(2n)\inv$. Conversely, if 
$\{x_{j} \}_{j=1}^{L_{n}^{*}}$ is 
a maximal $(2n)\inv$ separated subset of $E$, then
the intervals of length $n\inv$ centered at $x_{j}$ cover $E$.
\end{proof}

The Minkowski dimension, $\mdim (E)$ of  $E$
is defined as the limit, if it exists,
\begin{equation}\label{mink2}
\mdim(E)=\lim_{n\to\infty} \frac{\log L_{n}(E)}{\log n}=
\lim_{n\to\infty} \frac{\log L_{n}^{*}(E)}{\log n}.
\end{equation}

The lower Minkowski dimension $\lmdim (E)$ of  $E$
is well defined for all sets by 
\begin{equation}
\lmdim (E)=\liminf \frac{\log L_{n}(E)}{\log n}=
\liminf \frac{\log L_{n}^{*}(E)}{\log n}.
\end{equation}

\example
If $E=\{\frac{1}{j}\}_{j=1}^{\infty}$, the subset 
$\{\frac{1}{j}\}_{j=1}^{{n}}$ is $n^{-2}$ separated and
$L_{n^{2}}^{*}(E)\ge n$. On the other hand
the intervals $[jn^{-2},(j+1)n^{-2}]$, $j=1,\dots,{n}$ 
cover $\{\frac{1}{j}\}_{j={n}}^{\infty}$, and ${n}$ additional
intervals of the same size cover
$\{\frac{1}{j}\}_{j=1}^{{n}}$, so that $L_{n^{2}}(E)\le 2n$.
By \eqref{mink1} $L_{n^{2}}(E)\sim n$, the limit in \eqref{mink2}
exists, and
$\mdim(E)=\half$.

\subsection{Hausdorff dimension.  }\label{Hausdorff}
The Hausdorff dimension 
$\hdim(E)$ of
a set $E\subset \bbr$ is  the infimum of the numbers $ c$ 
for which there is a constant $C$ such that, for every $\ve>0$, there
exists a covering of $E$ by intervals $I_{n}$ satisfying:
\begin{equation}\label{hc}
\sup_{n} \abs{I_{n}}<\ve \qua \text{and} \qua \sum \abs{I_{n}}^{c}<C.
\end{equation}
Since covering by intervals of arbitrary lengths $\le \ve$ can be more 
efficient than covering by intervals of a fixed length, 
\begin{equation}
\hdim(E)\le \lmdim(E);
\end{equation}
the Hausdorff dimension of a set $E$
 is bounded above by its lower Minkowski dimension. 
 The inequality can be strict:  for example,
 if $E$ is countable then $\hdim(E)=0$, while
 $\lmdim(E)$ can be as high as 1.
 
 A useful criterion for a lower bound on the Hausdorff dimension
 of a closed set $E$ is the following:
 \begin{lemma}
Assume that $E$ carries a probability measure $\mu$
such that $\mu(I)\le C\abs{I}^{\delta}$ for every interval $I$
then $\hdim(E)\ge \delta$.
\end{lemma}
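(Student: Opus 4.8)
The plan is to use the measure $\mu$ to put a uniform positive lower bound on $\sum_n\abs{I_n}^{\delta}$ over all countable coverings $\{I_n\}$ of $E$ by intervals, and then to show that this is incompatible with any exponent $c<\delta$ being admissible in the definition of $\hdim(E)$ given in subsection~\ref{Hausdorff}.

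First I would record the basic estimate coming from $\mu$. Let $\{I_{n}\}$ be any countable covering of $E$ by intervals. Since $\mu$ is a probability measure carried by $E$ and $E\subset\bigcup_{n}I_{n}$, countable subadditivity gives
\begin{equation*}
1=\mu(E)\le\sum_{n}\mu(I_{n})\le C\sum_{n}\abs{I_{n}}^{\delta},
\end{equation*}
so that $\sum_{n}\abs{I_{n}}^{\delta}\ge 1/C$ for every covering of $E$ by intervals, no matter how small those intervals are.

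Next, suppose for contradiction that some $c<\delta$ is one of the exponents admitted in the definition of $\hdim(E)$: there is a constant $C'$ such that for every $\ve>0$ there is a covering $\{I_{n}\}$ of $E$ with $\sup_{n}\abs{I_{n}}<\ve$ and $\sum_{n}\abs{I_{n}}^{c}<C'$. For such a covering, using $\abs{I_{n}}<\ve$ and $\delta-c>0$,
\begin{equation*}
\sum_{n}\abs{I_{n}}^{\delta}=\sum_{n}\abs{I_{n}}^{\delta-c}\abs{I_{n}}^{c}\le\ve^{\delta-c}\sum_{n}\abs{I_{n}}^{c}\le C'\ve^{\delta-c}.
\end{equation*}
Letting $\ve\to0$ forces $\sum_{n}\abs{I_{n}}^{\delta}$ to be arbitrarily small along such coverings, contradicting the bound $\sum_{n}\abs{I_{n}}^{\delta}\ge1/C$ from the previous step. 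Hence no $c<\delta$ is admissible, and therefore the infimum defining $\hdim(E)$ is at least $\delta$.

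There is no real obstacle here: this is the ``mass distribution'' half of Frostman's lemma, and the only point requiring attention is to work with the paper's particular formulation of the Hausdorff dimension (an infimum of exponents $c$ for which a single constant works simultaneously at all scales) rather than the more familiar Hausdorff-measure formulation. The two interface precisely through the elementary inequality $\abs{I}^{\delta}\le\ve^{\delta-c}\abs{I}^{c}$ valid whenever $\abs{I}<\ve$, which is exactly what the computation above exploits.
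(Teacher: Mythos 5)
Your proposal is correct and follows essentially the same argument as the paper: the chain $1\le\sum\mu(I_{n})\le C\sum\abs{I_{n}}^{\delta}\le C\ve^{\delta-c}\sum\abs{I_{n}}^{c}$ for any covering with $\sup_{n}\abs{I_{n}}<\ve$, showing $\sum\abs{I_{n}}^{c}$ must blow up as $\ve\to 0$ for every $c<\delta$. You merely package it as an explicit contradiction, which is only a cosmetic difference.
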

 \begin{proof}
If $c<\delta$, and $I_{n}$ are  intervals such that $\abs{I_{n}}<\ve$  
and $\cup I_{n}\supset E$,   then
\begin{equation}
1\le \sum \mu(I_{n})\le C\sum\abs{I_{n}}^{\delta}\le C\ve^{\delta-c}\sum\abs{I_{n}}^{c}.
\end{equation}
That means $\sum\abs{I_{n}}^{c}>C\inv\ve^{c-\delta}$ which is unbounded as $\ve\to 0$.
\end{proof}

\subsection{Determining functions.}
A \emph{ Hausdorff  determining function} is a continuous nondecreasing function  $h$ on $[0,1]$ satisfying $h(0)=0$. The
Hausdorff dimension introduced in the previous subsection uses
explicitly, in \eqref{hc}, the functions $h_{c}(t)=t^{c}$, with $0<c\le1$ 
as does (implicitly) the definition of the Minkowski dimension.

A set $E\subset [0,1]$ has zero $h$-meassure if, for every $\ve>0$, there exist
intervals $I_{n}$ such that $\sum h(\abs{I_{n}})<\ve$ 
and $E\subset \cup I_{n}$.

A set $E\subset [0,1]$ is 
Minkowski-$h$-null if $\liminf L_{n}h(1/n)=0$. 

A set that is Minkowski $h$-null has zero $h$-measure. 
The converse is false.

\section{Restrictions of  Bounded Variation}

\subsection{The total variation of restrictions. }\label{mainvar}

    Given a function $f$ on $\bbr$ and a closed set $E$, 
we denotes the total variation of the restriction  $f\restr E$ 
of $f$ to $E$ by $\var(E,f)$,
and write $f\in BV(E)$ if $\var(E,f)<\infty$.

\begin{theorem} \textbf{I}: \ 
For every real-valued $f\in C([0,\,1])$, there are closed 
sets $G\subset [0,\,1]$,
such that $\hdim(G)\ge\half$  and $f\in BV(G)$.

\s
\textbf{II}:  \  
 There exists real-valued  functions $F\in C([0,1])$
such that $\var(E,F)=\infty$ for every closed set
 $E\subset [0,1]$ such that $\lmdim(E)> \half$,
(and, in particular, for closed sets $E$ such that $\hdim(E)>\half$). 
\end{theorem}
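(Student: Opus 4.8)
The plan is to treat the two parts by quite different constructions: for \textbf{I}, a Cantor-type set on which $f$ is actually monotone (so that there is no series to sum), and for \textbf{II}, an explicit lacunary series.

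\medskip
\textbf{Part I.} Since $f$ is uniformly continuous, fix a concave modulus of continuity $\omega$ for it, so $\osc(f,J)\le\omega(|J|)$ for every interval $J$. I would build $G=\bigcap_{m}G_m$ as a decreasing sequence of finite unions of closed intervals so that, inside each component $I$ of $G_m$, the components of $G_{m+1}$ lying in $I$ have pairwise disjoint $f$-ranges, linearly ordered along $I$ in one fixed sense (say increasing). Such children are produced by a level-crossing argument: in $I=[a,b]$ choose a fine partition $y_0<y_1<\dots<y_q$ of a range of values actually swept by $f$ on $I$, let $\tau_s$ be the first point of $I$ with $f(\tau_s)=y_s$ (the $\tau_s$ increase with $s$ by the intermediate value theorem), and take for the children small intervals $[\tau_s,\tau_s+\eta_s]$ with $\eta_s$ so small that $\omega(\eta_s)$ is below half the mesh $y_{s+1}-y_s$; then the $f$-ranges of the children are disjoint and increasing, and one further thins the index set $\{s\}$ to guarantee that the chosen $\tau_s$ are well separated. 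Iterating with a consistent orientation makes $f\restr G$ monotone, so $\var(G,f)=\osc(f,G)\le\osc(f,[0,1])<\infty$. For the dimension one tunes, at level $m$, the number of children and their lengths so that each component of $G_{m+1}$ has length roughly $q_m^{-2}$ times that of its parent while there are about $q_m$ of them, spread at scale $q_m^{-1}$ inside the parent; then $\#G_m$ is about the $\tfrac12$ power of the reciprocal of the common length at level $m$, and the probability measure giving equal mass to the components of $G_m$ satisfies $\mu(J)\le C|J|^{1/2}$, so $\hdim(G)\ge\tfrac12$ by the mass-distribution criterion of Section 1. The quadratic relation---one value-level consumed per child---is exactly what forces the exponent $\tfrac12$.

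The main obstacle is that a component $I$ on which $f$ is nearly constant, or monotone in the ``wrong'' sense, may admit too few usable increasing crossings. I would deal with this by first discarding or further subdividing so that on each retained component $f$ sweeps a fixed fraction of its oscillation; by allowing the decreasing orientation on an entire subtree, which, being used on a whole subtree at once, adds only $\osc(f,[0,1])$ more to the total variation; and, when the given $\omega$ is worse than $\sqrt{t}$, by additionally subsampling the scales on which $f$ is too rough so that the modulus effectively used in the length bookkeeping is $\lesssim\sqrt{t}$ --- reconciling a bad modulus with the quadratic size relation is the real work.

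\medskip
\textbf{Part II.} Take $F(t)=\sum_{k\ge1}a_k\cos(2\pi n_k t)$ with $a_k\asymp n_k^{-1/2}$ and $(n_k)$ a rapidly increasing (Hadamard, and as lacunary as the estimates below require) sequence; then $\sum a_k<\infty$, so $F\in C([0,1])$, and in fact $F\in\Lip_{1/2}$. Let $E$ be closed with $\lmdim(E)=\delta>\tfrac12$ and fix $\delta'\in(\tfrac12,\delta)$; by \eqref{mink1} and \eqref{mink2}, $E$ contains, for all large $n$, an $n^{-1}$-separated subset of cardinality $\ge n^{\delta'}$. After replacing $E$ by its trace on a subinterval where it is ``uniformly fat'', I would, for $n=n_k$, pick such a subset $\{x_1<\dots<x_m\}$, $m\ge n_k^{\delta'}$, contained in an interval of length $\asymp m\,n_k^{-1}$, with consecutive gaps all $\asymp n_k^{-1}$, and spread out enough that the phase increments $2\pi n_k(x_{j+1}-x_j)$ are roughly equidistributed mod $2\pi$. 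Then
\begin{equation*}
\var(E,F)\ \ge\ \sum_{j}\bigabs{F(x_{j+1})-F(x_j)}\ \ge\ \frac{\sum_{j}\bigl(F(x_{j+1})-F(x_j)\bigr)^2}{\max_{j}\bigabs{F(x_{j+1})-F(x_j)}}\,.
\end{equation*}
Since $F\in\Lip_{1/2}$ and the gaps are $\asymp n_k^{-1}$, the denominator is $\lesssim n_k^{-1/2}$. For the numerator, at scale $n_k^{-1}$ the series is dominated by its $k$-th term: the lower-frequency part has increments $\lesssim n_k^{-1}\sum_{l<k}a_l n_l\ll n_k^{-1/2}$ over such gaps, and the higher-frequency tail has sup-norm $\sum_{l>k}a_l\ll n_k^{-1/2}$, provided $(n_k)$ grows fast enough; hence $\sum_{j}\bigl(F(x_{j+1})-F(x_j)\bigr)^2\gtrsim a_k^2\sum_{j}\bigl(\cos2\pi n_kx_{j+1}-\cos2\pi n_kx_j\bigr)^2\gtrsim a_k^2\,m\gtrsim n_k^{\delta'-1}$ by the equidistribution of the phase increments. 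The ratio is therefore $\gtrsim n_k^{\,\delta'-1/2}$, which tends to infinity with $k$, so $\var(E,F)=\infty$. The delicate point is this lower bound on $\sum_{j}(F(x_{j+1})-F(x_j))^2$: one must rule out that an adversarial fat $E$ forces the sampled phases $2\pi n_k x_j$ to cluster near $2\pi\bbz$. That is exactly where the freedom to choose the sampled points inside $E$ is used, together with the lacunarity of $(n_k)$, which makes the interference from the other blocks genuinely negligible at scale $n_k^{-1}$.
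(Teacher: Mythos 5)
Your route is genuinely different from the paper's --- you try to make $f\restr G$ monotone via first--crossing times and to control the children by \emph{interval length} through the modulus of continuity --- and the gap you yourself flag as ``the real work'' is, I believe, fatal rather than technical. The child lengths are forced by $\omega(\eta_s)\le\frac{1}{2}(y_{s+1}-y_s)$, i.e.\ $\eta_s\le\omega^{-1}\bigl(\frac{a}{2q}\bigr)$; for a continuous $f$ with $\omega(t)=1/\log(1/t)$ this gives $\eta_s\le e^{-2q/a}$, exponentially small in $q$ rather than $\asymp q^{-2}$ times the parent, and the dimension of the limit set collapses to $0$. ``Subsampling the rough scales'' cannot repair this, since such an $f$ can be uniformly rough at every scale. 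Moreover the target itself is unattainable: the theorem in subsection 4.2 of this very paper produces continuous $f$ for which every set of monotonicity has zero $h$-measure for a prescribed $h$, hence (for $h(t)=1/\log(1/t)$) Hausdorff dimension $0$, and a countable union of monotonicity sets still has dimension $0$; while if you give up coherent orientations, the variation no longer telescopes and you need $\sum_m(\#\text{components at level }m)\cdot(\text{oscillation at level }m)$ to converge, which with $q_m$ children per node and oscillation divided by $q_m$ is the divergent $\sum_m a_0$. The lemma in subsection 2.2 resolves both difficulties at once by pigeonholing on the range with respect to \emph{Lebesgue measure}: a set of measure $\mu$ and oscillation $a$ always has two descendants of measure $\ge\mu/4$ and oscillation $\le a/2$, independently of $\omega$, and summability of the variation is bought by occasionally ($\sim 2\log_2 k$ times in $k$ steps) keeping only one descendant, at no cost in dimension.

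\textbf{Part II.} Your Cauchy--Schwarz ratio needs a run $x_1<\dots<x_m$ in $E$ with $m\ge n_k^{\delta'}$, separation $\ge n_k^{-1}$ \emph{and} gaps $\lesssim n_k^{-1}$, hence confined to an interval of length $\asymp n_k^{\delta'-1}$. No such run need exist: a self-similar Cantor set of dimension $\delta\in(\half,1)$ contains only $\asymp(rn)^{\delta}$ points at separation $n^{-1}$ in an interval of length $r$, i.e.\ $\asymp n^{\delta\delta'}\ll n^{\delta'}$ points in an interval of length $n^{\delta'-1}$, so the ``uniformly fat trace'' reduction fails. The upper bound on the gaps is essential to you twice over --- it controls the denominator $\max_j\abs{F(x_{j+1})-F(x_j)}$ and the low-frequency interference --- so this is not cosmetic; and the lower bound $\sum_j(\cos 2\pi n_kx_{j+1}-\cos 2\pi n_kx_j)^2\gtrsim m$ is asserted rather than proved, and must survive an adversarial $E$ clustering near the critical points of the cosine. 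The paper's sawtooth $\vp_{n}$ eliminates both problems: its derivative has constant modulus $2n$ off $2n$ break points, so \emph{every} increment over a gap $\ge s$ lying inside a linearity interval is $\ge 2nas$, with no upper bound on the gaps and no equidistribution required, and at most $2n$ of the $m=20n$ increments are discarded. This also lets the paper take the separation scale $s_k=n_k^{-2}\log n_k$, at which any closed $E$ with $\lmdim(E)>\half$ automatically contains $\gg n_k$ separated points, which is exactly what the count requires.
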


\subsection{}\label{procedure1}
The proof of part \bfit{I}  of the theorem uses the following lemma.
\begin{lemma} Let $I$ be an interval and
 $E\subset I$ a closed set, $\vp\in C(E)$
and $\osc(\vp,E)=a$. Then there are  subsets
$E_{j}\subset E$, $j=1,2$, carried by disjoint intervals $I_{j}$,
such that $\abs{E_{j}}\ge\frac{1}{4}\abs{E}$
and $\osc(\vp,E_{j})\le \frac{a}{2}$.
\end{lemma}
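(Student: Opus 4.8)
\noindent\emph{Proof plan.}
The plan is to obtain the factor $\frac14$ as a product $\half\cdot\half$: first cut $E$ spatially into two pieces of equal measure carried by disjoint intervals, and then inside each piece split the range of $\vp$ at its midpoint, which halves the oscillation while retaining at least half of the (already halved) measure.

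First I would dispose of the trivial case $\abs{E}=0$ (any two disjoint subintervals of $I$ work) and assume that $I=[c_0,c_1]$ is the convex hull of $E$. The function $g(t)=\abs{E\cap[c_0,t]}$ is continuous, since a single point is Lebesgue-null, and it increases from $0$ to $\abs{E}$; hence there is a ``spatial median'' $t_*$ with $g(t_*)=\half\abs{E}$. I would then set
\[
E'=E\cap[c_0,t_*),\qquad E''=E\cap[t_*,c_1].
\]
These are subsets of $E$ carried by the disjoint intervals $[c_0,t_*)$ and $[t_*,c_1]$, and, since $\{t_*\}$ is null, each has measure exactly $\half\abs{E}$.

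Next comes the level-set bisection, applied separately inside $E'$ and inside $E''$. Inside $E'$ let $m'=\inf_{E'}\vp$, $M'=\sup_{E'}\vp$, and $c'=\half(m'+M')$. The two sets $\{x\in E':\vp(x)\le c'\}$ and $\{x\in E':\vp(x)\ge c'\}$ cover $E'$, so one of them, call it $E_1$, satisfies $\abs{E_1}\ge\half\abs{E'}=\frac14\abs{E}$; and both have oscillation at most $\half(M'-m')\le\half\,\osc(\vp,E)=\frac a2$. Running the same argument inside $E''$ yields $E_2\subset E''$ with the analogous estimates. Since $E_1\subset[c_0,t_*)$ and $E_2\subset[t_*,c_1]$, these are carried by disjoint intervals, which completes the proof.

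I do not expect a serious obstacle. The only point requiring care is that the constant $\frac14$ is attained with no slack (the oscillation-halving can cover $E'$ by two sets meeting in a null set, and the median cut is forced to be exact), so the two carrying intervals must meet at $t_*$; this is exactly why I would cut $E'$ out along the half-open interval $[c_0,t_*)$ rather than $[c_0,t_*]$, removing the overlap at no cost in measure. I would also note that $E'$ and $E''$ need not be closed for the level-set step, since $\osc$ is defined through a supremum and an infimum.
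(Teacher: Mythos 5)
Your proof is correct and takes essentially the same approach as the paper's: bisect $E$ spatially at the point where half the measure lies to the left, then within each half bisect the range of $\vp$ at its midpoint and keep the piece of larger measure, giving $\abs{E_{j}}\ge\tfrac12\cdot\tfrac12\abs{E}$ and oscillation $\le a/2$. The only (immaterial) difference is your half-open interval at the cut point, where the paper simply lets the two carrying intervals share the endpoint.
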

\begin{proof}
If $I=[t_{1},t_{2}]$ let $t_{3}$ be such that 
$\abs{E\cap [t_{1},t_{3}]}=\half\abs{E}$. Set $I_{1}=[t_{1},t_{3}]$
and $I_{2}=[t_{3},t_{2}]$.

Define $E_{1}\subset I_{1}$ as follows:
Let $[c,c+a]$ be the smallest interval containing $\vp(E\cap I_{1})$. Write
$G_{1}=E\cap\vp\inv([c,c+\half a])$ and 
$G_{2}=E\cap\vp\inv([c+\half a,c+a])$, 
and observe that either $\abs{G_{1}}\ge \half\abs E$
or $\abs{G_{2}}\ge \half\abs E$ (or both).
 Set $E_{1}$ as  $G_{1}$ in the first case, and as $G_{2}$ otherwise. 
 Define $E_{2}\subset I_{2}$ in the same way.
\end{proof}
We call the sets $E_{j}$ \emph{descendants} of $E$, and refer to
the replacement of each $E$ by its two descendants as the 
\emph{standard procedure}. We sometime use the \emph{alternate
procedure } in which we replace each  $E$ by only one of the two
descendants.

\begin{proof}[Proof of the theorem, part \bfit{I}  ]
Let $f\in C([0,1])$ be real-valued. We apply the lemma, 
with $\vp=f$, repeatedly. We use the standard
procedure most steps  and the alternate procedure 
occasionally, $c(k)\sim 2\log_{2} k$ times out of $k$. 
After  $k$ iterations  we have a set $\ce_{k}$ which is the union
of
$2^{k-c(k)}\sim 2^{k}k^{-2}$ sets $E_{k,\alpha}$, each of Lebesgue measure $\ge 2^{-2k}$, carried
by disjoint intervals $I_{k,\alpha}$, and such that 
$\osc(g,E_{k,\alpha})\le 2^{-k}$.
Write $G=\bigcap_{k}\ce_{k}$.

For $x,y\in G$ let $k(x,y)$ be the last $k$ such that $x$ and $y$
are in the same component $E_{k,\alpha}$. Remember that
$\abs{f(x)-f(y)}\le 2^{-k}$. 

In  a monotone sequence $\{x_{j}\}_{j=1}^{N}\subset G$  and 
any $k\in\bbn$,
there are at most $2^{k-c(k)}\sim 2^{k}k^{-2}$ values of $j$
for which $k(x_{j},x_{j+1})=k$; so that 
\begin{equation}
\sum\abs{f(x_{j+1})-f(x_{j})}\le \sum 2^{k-c(k)}2^{-k}\sim \sum 2^{k}k^{-2}2^{-k}=\sum k^{-2}.
\end{equation}
It follows that the total variation of
$f\restr{G}$ is    bounded by $\sum k^{-2}$.

 Let $\mu_{k}$ a probability measure
carried by $\ce_{k} $ that puts the same mass $2^{c(k)-k}$  on every
$E_{k,\alpha}$. Observe that, for all $l\in\bbn$,
$\mu_{k+l}(E_{k,\alpha})=\mu_{k}(E_{k,\alpha})$.

Let $\mu$ be a weak-star limit of $\mu_{k}$ as 
$k\to\infty$. Since every interval $I$ of length $2^{-2k}$ intersects
at most two sets of the form
$E_{k,\alpha}$ we have $\mu(I)\le C\abs{I}^\frac{k-c(k)}{2k}$ and, by lemma
\ref{Hausdorff} 
$\hdim{G}\ge 1/2$.
\end{proof}

\subsection{}
The proof of part \bfit{II} of the theorem
is a construction that uses as a building block 
the  2-periodic  function $\vp$, defined by:
\begin{equation}\label{varphi}
\vp(2m+x)=1-\abs{x}\qquad\text{for  $\abs{x}\le 1$ and $m\in\bbz$.}
\end{equation}
 We  write $\vp_{n}(x)=\vp(2nx)$.

\begin{lemma}\label{basic_{d=1}}
Let $J=\{x_{j}\}\subset [0,1]$ be %
an $s$-separated monotone sequence of length $m$. If $m>2n$, 
then, for $a>0$,
\begin{equation}\label{basic0}
\var(J,a\vp_{n})=
\sum \abs{a\vp_{n}(x_{j+1})-a\vp_{n}(x_{j})}\ge (m-2n)2nas.
\end{equation}
\end{lemma}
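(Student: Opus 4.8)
The plan is to estimate the total variation of $a\vp_n$ along the separated sequence $J$ by controlling, for each point $x_j$, the amount that $\vp_n$ moves between $x_j$ and $x_{j+1}$, and summing. Recall $\vp_n(x)=\vp(2nx)$, where $\vp$ is the $2$-periodic tent of slope $\pm1$, so $\vp_n$ is piecewise linear with slope $\pm 2n$ everywhere except at its corners (the points where $2nx\in\bbz$, i.e. $x\in\frac1{2n}\bbz$). The key local observation is that on any interval of length $\le \frac1{2n}$ that contains \emph{no} corner of $\vp_n$, the function $\vp_n$ is affine with slope exactly $\pm 2n$, hence $\abs{\vp_n(x_{j+1})-\vp_n(x_j)} = 2n\abs{x_{j+1}-x_j} \ge 2ns$ since $J$ is $s$-separated (note $s<\frac1{2n}$ is forced once $m>2n$ and the $x_j$ lie in $[0,1]$, so consecutive gaps are indeed shorter than a half-period for all but boundedly many indices). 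So the mechanism is: most consecutive pairs $(x_j,x_{j+1})$ straddle at most one corner, and even straddling one corner costs at least... well, this needs a little care, which is the point below.

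First I would make precise the counting of ``bad'' pairs. A pair $(x_j,x_{j+1})$ is \emph{bad} if the open interval $(x_j,x_{j+1})$ contains a corner of $\vp_n$. The corners in $[0,1]$ are the points $\frac{k}{2n}$, $k=0,1,\dots,2n$, so there are at most $2n+1$ of them; since the intervals $(x_j,x_{j+1})$ are pairwise disjoint, at most $2n$ of the $m-1$ pairs can be bad (a pair could in principle contain two corners only if $x_{j+1}-x_j>\frac1{2n}$; I'd note that even then the count of bad pairs is still at most the number of corners, $\le 2n$, since distinct corners in distinct disjoint intervals — and here it's cleanest to just say ``at most $2n$ pairs are bad'' and discard all of them). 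For each of the remaining $\ge m-1-2n \ge m-2n$ \emph{good} pairs, $\vp_n$ restricted to $[x_j,x_{j+1}]$ is affine of slope $\pm2n$, so contributes exactly $2n\abs{x_{j+1}-x_j}>2ns$ to the sum. Multiplying by $a$ and summing the good terms gives $\var(J,a\vp_n)\ge (m-2n)\cdot 2nas$, which is the claimed bound \eqref{basic0}.

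The main subtlety — the one place the argument could go wrong if rushed — is the interaction between the separation constant $s$ and the half-period $\frac1{2n}$: if $s\ge\frac1{2n}$ the tent $\vp_n$ could be \emph{hit near its peaks} and a ``good'' slope-$2n$ pair is fine, but a pair straddling a peak could have much smaller increment than $2ns$. My resolution is exactly the one above: classify such straddling pairs as bad and simply throw them away, at a total cost of discarding at most $2n$ pairs regardless of how $s$ compares to $\frac1{2n}$. One should check $m-1-2n$ versus $m-2n$: we're told $m>2n$, and if $m-2n\le 0$ the inequality \eqref{basic0} is vacuous (RHS $\le0$), so we may assume $m-2n\ge1$, i.e. $m\ge 2n+1$, giving $m-1-2n\ge 0$; to land the clean constant $m-2n$ rather than $m-1-2n$ I'd observe that among $m-1$ disjoint open subintervals of $[0,1]$ partitioned by the $x_j$, together with the fact that the first gap $[0,x_1]$ and last gap $[x_m,1]$ also lie between corners, at most $2n$ of the $m-1$ gaps can contain a corner — or, even more simply, note that $(m-1)-2n = m-2n-1 \ge m-2n - 1$ and absorb the off-by-one by using the weak inequality $\var \ge (m-2n)\cdot 2nas$ already tolerates it once one sharpens the corner count to ``at most $2n-1$ bad pairs among $m-1$'' using that a corner at $0$ or $1$ cannot be interior to any gap. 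In any case this is bookkeeping; the substantive content is the affine-slope-$2n$ lower bound on good pairs, and that is immediate from the definition \eqref{varphi} of $\vp$.
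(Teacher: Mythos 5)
Your proof is correct and follows essentially the same route as the paper: discard the at most $2n$ ``bad'' consecutive pairs straddling a corner $\frac{\ell}{2n}$ of $\vp_{n}$, and on each remaining pair use that $\vp_{n}$ is affine of slope $\pm 2n$ to get a contribution of at least $2nas$. Your extra care with the off-by-one (noting that the corners at $0$ and $1$ cannot be interior to any gap, so at most $2n-1$ pairs are bad among $m-1$) is a valid tightening that the paper glosses over; just delete the slip ``$m-1-2n\ge m-2n$'' in the middle, since that inequality is false as written and is superseded by your own correction.
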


\begin{proof}
There are at most $2n$ values of $j$ for which
$x_{j}$ and $x_{j+1}$ are separated by some $\frac{\ell}{2n}$, ($\ell=1,\dots,2n$).
For all other $j$ we have $a\vp_{n}$ linear and 
$\abs{a\vp_{n}'}=2an$ in $[x_{j},x_{j+1}]$ so that
\begin{equation}%\label{}
\abs{a\vp_{n}(x_{j+1})-a\vp_{n}(x_{j})}= 2na(x_{j+1}-x_{j})\ge 2nas,\\
\end{equation} and there are at least $m-2n$ such values of $j$.
\end{proof}

\subsection{}
We can modify $a\vp_{n}$ somewhat without affecting \eqref{basic0}
materially.
\begin{lemma}
Let $g\in C([0,1])$,  $\norm{g}_{\infty}<nsa/10$,
and $G\in C([0,1])$ with  Lipschitz constant
bounded by $\frac{na}{10}$,  then
 \begin{equation}\label{basic11}
\var(J,G+a\vp_{n}+ g)\ge (m-2n)nsa.
\end{equation}
\end{lemma}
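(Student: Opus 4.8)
The plan is to show that the two perturbations together can spoil at most half of the variation lower bound established in the previous lemma. First I would recall the dichotomy used there: among the intervals $[x_j,x_{j+1}]$ determined by the monotone sequence $J$, at most $2n$ are \emph{bad}, meaning that some point $\ell/(2n)$ ($\ell=1,\dots,2n$) lies strictly between $x_j$ and $x_{j+1}$; hence at least $m-2n$ of them are \emph{good}. On a good interval the $2$-periodic sawtooth $a\vp_n$ is affine with slope $\pm 2na$, so
\[
\abs{a\vp_n(x_{j+1})-a\vp_n(x_j)}=2na\,(x_{j+1}-x_j)\ge 2nas,
\]
using that $J$ is $s$-separated.

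Next I would estimate the two perturbations on a fixed good interval. Since $G$ has Lipschitz constant at most $na/10$,
\[
\abs{G(x_{j+1})-G(x_j)}\le \frac{na}{10}\,(x_{j+1}-x_j)=\frac{1}{20}\abs{a\vp_n(x_{j+1})-a\vp_n(x_j)},
\]
so $G$ erodes the increment of $a\vp_n$ by at most the factor $1/20$; this is the one estimate that uses that $G$ is Lipschitz rather than merely bounded. For $g$ I have only the sup-norm bound, so per interval I can say no more than $\abs{g(x_{j+1})-g(x_j)}\le 2\norm{g}_\infty<nsa/5$ — a bound that is uniform in $j$ rather than proportional to $x_{j+1}-x_j$, which is the only mildly delicate point. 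Combining these by the triangle inequality, on each good interval
\[
\abs{(G+a\vp_n+g)(x_{j+1})-(G+a\vp_n+g)(x_j)}\ \ge\ 2na(x_{j+1}-x_j)-\frac{na}{10}(x_{j+1}-x_j)-\frac{nsa}{5}\ \ge\ \frac{17}{10}nas.
\]

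Finally, writing $\var(J,G+a\vp_n+g)$ as the sum of the increments $\abs{(G+a\vp_n+g)(x_{j+1})-(G+a\vp_n+g)(x_j)}$ and keeping only the at least $m-2n$ good terms gives
\[
\var(J,G+a\vp_n+g)\ \ge\ (m-2n)\cdot\frac{17}{10}\,nsa\ \ge\ (m-2n)\,nsa,
\]
as claimed. I do not expect a genuine obstacle: the argument is just the previous lemma plus a triangle inequality, and the only thing to check is that on a good interval the leading term $2na(x_{j+1}-x_j)$ strictly dominates the sum of the two perturbation bounds with room to spare. It is precisely the explicit constants in the hypotheses $\norm{g}_\infty<nsa/10$ and $\Lip(G)\le na/10$ that provide this room; more careful bookkeeping of the gap lengths would yield a constant close to $2$, but only a constant $\ge 1$ is used.
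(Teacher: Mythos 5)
Your argument is correct and is essentially the paper's own proof: both restrict attention to the at least $m-2n$ ``good'' intervals on which $a\vp_n$ is affine with slope $\pm 2na$, then apply the triangle inequality with the Lipschitz bound for $G$ (proportional to $x_{j+1}-x_j$) and the sup-norm bound for $g$ (uniform in $j$), the stated constants leaving a margin of $\tfrac{17}{10}nsa>nsa$ per good interval. The only cosmetic difference is that the paper converts the bound on $g$ into a proportional one via $nsa/5\le \tfrac{na}{5}(x_{j+1}-x_j)$ before summing, which changes nothing.
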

\begin{proof}
For the 
values of $j$ for which
$x_{j}$ and $x_{j+1}$ are \emph{not} separated by some 
$\frac{\ell}{2n}$
 we have
\begin{equation}
\begin{split}
\abs{a\vp_{n}(x_{j+1})-a\vp_{n}(x_{j})}&= 2na(x_{j+1}-x_{j}),\\
\abs{G(x_{j+1})-G(x_{j})}&\le \frac{na}{10} (x_{j+1}-x_{j}),\\
\abs{g(x_{j+1})-g(x_{j})}&\le \frac{nas}{5}\le \frac{na}{5} (x_{j+1}-x_{j}),
\end{split}
\end{equation}
so that
\begin{equation*}%\label{}
\abs{G+a\vp_{n}+g)(x_{j+1})-
(G+a\vp_{n}+g)(x_{j})}\ge (2na-\frac{na}{10})(x_{j+1}-x_{j})-
\frac{nsa}{5}>nsa
\end{equation*}
which implies  \eqref{basic11}
\end{proof}

We use the lemma with $m=20n$ and the right-hand sides
of \eqref{basic0} and \eqref{basic11} will be (wastefully)
written simply as $n^{2}as$.
 
 \subsection{\hspace*{1pt} }
 \begin{proof}[Proof of  theorem 2.1, part \bfit{II} ]
For sequences $\{a_{l}\}$, $a_{l}>0$, and
$\{n_{l}\}\subset \bbn$ 
 write: $m_{l}=20 n_{l}$,  $s_{l}= n_{l}^{-2}\log n_{l}$, and
\begin{equation}
F=\sum_{l=1}^{\infty} a_{l}\vp_{n_{l}},\qquad
G_{k}=\sum_{l=1}^{k-1} a_{l}\vp_{n_{l}},\qquad
g_{k}=\sum_{l=k+1}^{\infty} a_{l}\vp_{n_{l}},
\end{equation}

\s
The sequences $\{a_{l}\}$, $a_{l}>0$ and
$\{n_{l}\}\subset \bbn$  are chosen (below)
 so that 
 
\enumaba{  
\item  $a_{k}\log n_{k}>k$,
\item  $\sum_{l=1}^{k-1} a_{l}n_{l}< \tenth a_{k}n_{k} $
\item $\sum _{l>k}a_{l}< \tenth n_{k}a_{k}s_{k}$.
}
\sn
These conditions guarantee that the lemma applies 
with $n=n_{k}$, $G=G_{k}$ and $g=g_{k}$
so that if $J$ is $s_{k}$ separated of length $m_{k}$, then
\begin{equation}%\label{}
\var(J,F)\ge n_{k}^{2}a_{k}s_{k}=a_{k}\log n_{k}>k.
\end{equation}

If $\lmdim(E)>1/2$ then, for all 
$k>k(E)$, $E$ contains $s_{k}$-separated sequences 
$J_{E}(n_{k})$ of length $m_{k}$, so that
\begin{equation}%\label{}
\var(E,F)\ge \var(J_{E}(n_{k}),F)>k,
\end{equation} 
and
the
function $F=\sum_{l=1}^{\infty} a_{l}\vp_{n_{l}}$ 
has infinite variation on every closed $E$
such that $\lmdim(E)>\half$. 
 
The sequences 
$\{a_{l}\}$ and
$\{n_{l}\}$ 
are defined recursively:

 Take  $a_{1}=1/2$  and $n_{1}=100$.
 
 If $a_{l}$ and $n_{l}$ defined for $l\le k$, 
 set $a_{k+1}=\frac{1}{20} a_{k}n_{k}^{-1}$,\qua and observe that
 this rule guarantees that
$\sum_{j>k} a_{j}<2 a_{k}$, so that 
\cac \ is satisfied.  

 Now take $n_{k+1}$ big enough to satisfy 
conditions  \aba and \bab \ 
\end{proof}

\subsection{ $\bbr^{d}$-valued functions.} \label{multi}

The generalization of Theorem \ref{mainvar} to $\bbr^{d}$-valued functions is the following statement:

\begin{theorem} \textbf{I}: \ 
For every continuous $\bbr^{d}$-valued  
function $g$, there are closed 
sets $E\subset [0,\,1]$,
such that $\hdim(E)\ge\frac{1}{d+1}$  and $g\in BV(E)$.

\textbf{II}: \  
There exists continuous $\bbr^{d}$-valued
functions $F$
such that if $E\subset [0,1]$ is closed and $\lmdim(E)> \frac{1}{d+1}$
then $\var(E,F)=\infty$.

\end{theorem}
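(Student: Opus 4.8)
The plan is to mimic the one-dimensional argument of Theorem 2.1, adapting both parts to $\bbr^d$. For part \textbf{I}, the key observation is that the splitting lemma of subsection~\ref{procedure1} controls only \emph{one} coordinate at a time. So I would iterate a $d$-fold version: given a closed set $E$ with $\abs{E}>0$ and a continuous $g=(g^{(1)},\dots,g^{(d)})$, apply the lemma successively to each coordinate $g^{(i)}$, $i=1,\dots,d$, each time halving the oscillation of that coordinate while retaining a fraction of the measure. One full ``round'' of $d$ coordinate-splits turns each surviving set into $2^d$ descendants, each of measure at least $4^{-d}\abs{E}$ and with $\osc(g^{(i)},\cdot)$ halved for every $i$. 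After $k$ rounds we would have roughly $2^{dk}$ sets of measure $\gtrsim 4^{-dk}=2^{-2dk}$, with every coordinate oscillation $\le 2^{-k}\osc$. As before, I would use the alternate (single-descendant) procedure about $c(k)\sim 2\log_2 k$ of the first $k$ rounds so that the number of surviving components is $\sim 2^{dk}k^{-2}$. Then the same bookkeeping shows $\var(E_\infty,g^{(i)})\lesssim\sum k^{-2}$ for each $i$, hence $g\in BV(E_\infty)$, while a surviving set of measure $\gtrsim 2^{-2dk}$ sitting inside $\sim 2^{dk}$ disjoint intervals gives, via the mass-distribution Lemma of subsection~\ref{Hausdorff}, $\hdim(E_\infty)\ge dk/(2dk)=\tfrac12$\dots{} which is the \emph{wrong} bound. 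The fix is that an interval of length $2^{-2dk}$ (the measure-scale) should be compared with the number of sets, which is $2^{dk}$: the exponent is $\tfrac{\log 2^{dk}}{\log 2^{2dk}} = \tfrac12$ again, so I must instead control interval-length differently --- the natural scale is set by how the splits separate the intervals, and counting shows the descendant intervals at round $k$ have length $\gtrsim 2^{-(d+1)k}$ roughly (each coordinate-split of a single round at most doubles the number of intervals but the measure lower bound $2^{-2dk}$ is wasteful). I would tighten the measure bound: splitting by measure into halves costs a factor $2$ in measure per coordinate-split, so after $dk$ splits measure $\ge 2^{-dk}$, and an interval of length $2^{-dk}$ meets $O(1)$ components, giving $\mu(I)\le C\abs{I}^{(dk-c(k))/(dk)}\cdot\text{(correction)}$; combined with $\sim 2^{dk}$ components this yields $\hdim\ge \tfrac{1}{d+1}$ after balancing $2^{dk}\cdot(\text{length})^{1/(d+1)}$. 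The bookkeeping to get exactly $\tfrac{1}{d+1}$ rather than $\tfrac12$ is the main obstacle and must be done carefully.

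For part \textbf{II}, I would construct $F=\sum_l a_l \Phi_{n_l}$ where each $\Phi_{n_l}\colon[0,1]\to\bbr^d$ is a vector-valued analogue of $a\vp_n$ designed so that its range ``fills'' $d$ directions. The natural choice is a space-filling-type or Peano-like building block: partition $[0,1]$ into $n^d$ subintervals and let $\Phi_n$ run along the $d$ coordinate axes in turn, i.e. on the $j$-th block of length $n^{-d}$ the map moves only in coordinate $j \bmod d$ with slope $\sim n$. Then an $s$-separated sequence of length $m$ forces, as in Lemma~\ref{basic_{d=1}}, a variation $\gtrsim$ (number of ``good'' gaps)$\times$(slope)$\times s$, but now the relevant ``bad'' gaps are those straddling one of the $n^d$ breakpoints of $\Phi_n$, so the lemma reads $\var(J,a\Phi_n)\gtrsim (m-2n^d)\,n\,a\,s$. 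Repeating the perturbation lemma of subsection~2.5 (with $G+a\Phi_n+g$, $\norm{g}_\infty$ and $\Lip(G)$ small relative to $na$), and choosing $m_l \sim n_l^d$, $s_l \sim n_l^{-(d+1)}\log n_l$, and $a_l,n_l$ recursively exactly as in subsection~2.6 so that $\var(J,F)\ge n_l\cdot a_l\cdot n_l^{-(d+1)}\log n_l \cdot n_l^{d+1}\sim a_l\log n_l \to\infty$ --- wait, the exponents need rebalancing: with $m\sim n^d$ and slope $n$ and $s\sim s_l$ one gets $\var\gtrsim n^d\cdot n\cdot a\cdot s$, and to make this $\to\infty$ while keeping $s_l\cdot m_l \le 1$ (feasibility of an $s_l$-separated sequence of length $m_l$ in a set of lower Minkowski dimension $>\tfrac{1}{d+1}$) one needs $s_l\sim n_l^{-d}$ up to logs; then $\lmdim(E)>\tfrac{1}{d+1}$ is exactly the condition guaranteeing $E$ contains $s_l$-separated sequences of length $m_l\sim s_l^{-1/(d+1)}\sim n_l$, so in fact $m_l\sim n_l$, not $n_l^d$, and the building block must instead be chosen with $n^d$ oscillations but amplitude arranged so that a length-$n$ separated sequence still sees $\Omega(n)$ of them --- this forces the Peano-block construction where consecutive sample points at scale $n^{-1}$ land in different coordinate directions.

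Concretely I would take $\Phi_n$ so that the $i$-th coordinate $\Phi_n^{(i)}$ equals $\vp_{n^d}$ restricted (up to reindexing) to the residue class $i \bmod d$ of blocks, so that along any $n^{-d}$-spaced progression \emph{some} coordinate always has slope $\sim n^d$; then an $s$-separated sequence of length $m\sim n$ with $s\sim n^{-(d+1)}$ (which exists once $\lmdim(E)>\tfrac1{d+1}$) produces total variation, summed over coordinates, of order $m\cdot n^d\cdot a\cdot s\sim n\cdot n^d\cdot a\cdot n^{-(d+1)}=a$, times a $\log$ factor if $s\sim n^{-(d+1)}\log n$ --- so set $s_l=n_l^{-(d+1)}\log n_l$, $m_l=20\,n_l$, and copy the recursive choice of $a_l,n_l$ from subsection~2.6 verbatim (replacing the exponents $2$ by $d+1$ and $1$ by $d$ where they count breakpoints). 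The main obstacle in part~\textbf{II} is designing the vector building block $\Phi_n$ so that \emph{every} sufficiently dense arithmetic-progression-like sequence is forced to accumulate variation, regardless of which of the $d$ directions it ``aligns with'' --- the scalar $\vp_n$ needs no such care, but in $\bbr^d$ one must ensure no separated sequence can ``dodge'' all $d$ coordinates simultaneously, and verifying this (essentially a pigeonhole over residue classes mod $d$ combined with the counting in Lemma~\ref{basic_{d=1}}) is where the real work lies. The perturbation step (subsection~2.5) and the recursive construction (subsection~2.6) then transfer with only cosmetic changes to the exponents.
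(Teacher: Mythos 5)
Both parts of your proposal contain genuine gaps, and in each case the gap sits exactly where the paper's (very terse) proof makes its one real adjustment to the $d=1$ argument.

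In part \textbf{I}, your iteration scheme breaks the bounded-variation estimate. By applying the splitting lemma once per coordinate you produce $2^{d}$ spatially disjoint descendants per round, hence $\sim 2^{dk}k^{-2}$ components after $k$ rounds, each with coordinate oscillations $\le 2^{-k}$. The variation bookkeeping then gives $\sum_{k} 2^{dk}k^{-2}\cdot 2^{-k}=\sum_{k}2^{(d-1)k}k^{-2}$, which diverges for $d\ge 2$. You noticed that the dimension exponent comes out as $\tfrac12$ and sensed an inconsistency, but misread it: $\tfrac12$ would be \emph{too strong} (it would contradict part \textbf{II}), and the actual failure is that your set is not a BV set at all. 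The paper's fix, which your repair attempt circles around but never states, is to keep exactly \emph{one} spatial bisection per round (two descendants, as in $d=1$) and, within that single round, halve the oscillation of all $d$ coordinates successively; each halving costs a factor $2$ of measure, so each descendant has measure $\ge 2^{-(d+1)}\abs{E}$. The component count stays at $2^{k-c(k)}$, so the variation sum is $\sum 2^{-c(k)}\sim\sum k^{-2}$ exactly as before, while the measure bound $\abs{E_{k,\alpha}}\ge 2^{-(d+1)k}$ feeds into the mass-distribution lemma to give $\mu(I)\le C\abs{I}^{\frac{k-c(k)}{(d+1)k}}$ and hence $\hdim\ge\frac{1}{d+1}$.

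In part \textbf{II}, you correctly identify the crux --- preventing a separated sequence from ``dodging'' all $d$ coordinates at once --- but your residue-class building block does not achieve it. If coordinate $i$ is a triangle wave on its active blocks, it repeatedly returns to the same values, and nothing stops two well-separated points from having \emph{all} coordinates nearly return simultaneously, so no lower bound on $\norm{\Phi_{n}(x_{j+1})-\Phi_{n}(x_{j})}$ follows. (There is also a counting problem in your first variant: with $n^{d}$ breakpoints and only $\sim n$ sample points, every consecutive pair can straddle many blocks.) The paper's construction is genuinely different in kind: $\psi_{n,d}$ traces a Hamiltonian path through the $(m+1)^{d}$ grid points of $\frac{1}{m}\{0,\dots,m\}^{d}$ with $m=[n^{1/d}]$, mapping consecutive parameter subintervals of length $(m+1)^{-d}\sim 1/n$ linearly onto consecutive edges of the path. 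Because the path visits \emph{distinct} grid points, any two parameters in the same period at distance $\ge 1/n$ land at points of $\bbr^{d}$ at distance $\ge n^{-1/d}$ --- a global injectivity-type property, not a slope property, and it is exactly what your construction lacks. With that block in hand, the separation scale $n^{-(d+1)/d}$, sequence length $\gg n^{1/d}$, and the perturbation and recursion of subsections 2.4--2.5 do transfer as you describe.
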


The proofs of both parts are the obvious variations on the proofs
for $d=1$. 

The proof of part \textbf{I} differs from that of the corresponding
part of Theorem \ref{mainvar} only in the estimate of the measures
of the
sets $E_{k,\alpha}$ defined at the $k$'th stage,  carried, as before,
by disjoint intervals $I_{k,\alpha}$, and such that 
$\osc(g,E_{k,\alpha})\le 2^{-k}$, but
now of Lebesgue measure $\ge 2^{-(d+1)k}$. This guarantees that
the Hausdorff dimension of the set, constructed as before,
is $\ge\frac{1}{d+1}$.

For part \bfit{II} we replace the function $\vp_{n}$ 
by 
$\psi_{n}=\psi_{n,d}(mx)$ where $m=[n^{1/d}]$  (the integer part 
of $n^{{1}/{d}}$) and
$\psi_{n,d}$ is an even $2$-periodic $\bbr^{d}$-valued 
 function satisfying:   
 $\norm{\psi_{n,d}}\le 1$
 and,  for $x,y$  such that  $[x]=[y]$ and $\abs{x-y}\ge 1/n$:
\begin{equation}\label{315}
\norm{\psi_{n,d}(x)-\psi_{n,d}(y)}\ge
n^{-\frac{1}{d}} 
\end{equation}
so that
\begin{equation}\label{316}
\norm{\psi_{n}(x)-\psi_{n}(y)}\ge n^{-\frac{1}{d}} 
\qua\text{if}\qua  [mx]=[my]\qua\text{and}\qua
\abs{x-y}\ge n^{-\frac{d+1}{d}}. 
\end{equation}
A set $E$ such that 
$\lmdim (E)>\frac{1}{d+1}$, $E$ contains, when $n$ is large, 
$n^{-\frac{d+1}{d}}$-separated
sequences $\{x_{j}\}_{1}^{L}$ of length $L>>n^{\frac{1}{d}}$ 
%,  
and for all, but at most $m\sim n^{\frac{1}{d}}$ values of $j$,
we have
$\norm{\psi_{n}(x_{j+1})-\psi_{n}(x_{j})}\ge n^{-\frac{1}{d}}$
so that
the variation of $\psi_{n}$ on $E$ is large.

 One can construct the functions $\psi_{n,d}$ as follows.
 Let %
 $A_{m}=A_{m,d}$ be the set of $(m+1)^{d}$ points 
 $v_{l}=(k_{1},\dots k_{d})$ satisfying $0\le k_{j}\le m$
 in $\bbn^{d}$, enumerated in a way that $\norm{v_{l+1}-v_{l}}=1$, i.e.,
 $v_{l}$ and $v_{l+1}$ have the same entries  except for one, on which  they differ by 1. \ 
 The function $\psi_{n,d}$
 is defined on $[-1,1]$ by stipulating that it  is 2-periodic, 
 even, and it maps
 $[\frac{l}{(m+1)^{d}},\frac{l+1}{(m+1)^{d}}]$ linearly onto 
 $[\frac{v_{l}}{m},\frac{v_{l+1}}{m}]$.

\s

\section{%Lipschitz and 
H\"older restrictions}\label{holderrest}
%\subsection{Positive results}
%\starsubsection{}\label{star22}
%We don't know if the estimate of the Hausdorff
%dimension in part \cac\  is best possible.  In
%part \bab\ it is best possible:
\begin{stheorem}
\textbf{I}: \ 
Assume $0<\alpha<1$. Given a continuous function $f$, there exists a closed set $E$ such that $\hdim E=1-\alpha$, and $f\restr E\in\Lip_{\alpha}$.

\textbf{II}: \ For $0<\alpha<1$
there exist continuous functions $f$ such
that if $f\restr E\in \Lip_{\alpha}$ for a closed set $E$,
then 
$\hdim E \le 1-\alpha$. 
%has Hausdorff dimension bounded by $1-\alpha$.
\end{stheorem}\label{3.1}

Part \bfit{I} of the
theorem derives easily from properties of Gaussian stationary processes
on the circle, established in  \cite{jpkrandom}. The proof reads:

``Take a Gaussian stationary process $X$ on the circle (Fourier series with independent Gaussian coefficients) such that  
$X\in\Lip_{\alpha} $ and $\hdim {X^{-1}(0)}=\alpha$ a.s. Then write 
$E=(X-f)\inv(0)$ and apply remark 2 in Chapter 14, section 5, 
page 206 of \cite{jpkrandom}.''

Part \bfit{II} of the theorem shows that part \bfit{I} is optimal.
We give here an elementary proof of 
both parts.

\subsection{}\label{proceduregen}
We prove part {\bfit I} of the theorem by the method used in
the proof of part {\bfit I} of theorem \ref{mainvar}. The following is an
extension of the procedures introduced in \ref{procedure1}.

\begin{lemma} Let 
 $E\subset I\subset [0,1]$ be a closed set, $f\in C_{\bbr}(E)$
and $\osc(f,E)=a$. Given $\ve>0$, integers $k\ge 2$ and $l\ge 2$, 
 there are  subsets
$E_{m}\subset E$, $m=1,2,\dots, k$, carried by disjoint intervals 
$I_{m}$,
such that 
\begin{enumerate}[\bf a. ]
\item The distance between any two $E_{m}\stru's$ is at least $\abs{E}\ve/k$;
\item $\abs{E_{m}}\ge\frac{1-\ve}{kl}\abs{E}$;
\item
$\osc(f,E_{m})\le \frac{a}{l}$.
\end{enumerate}
\end{lemma}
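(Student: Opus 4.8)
The plan is to mimic the proof of Lemma 2.2 (the $k=2$, $l=2$ case), iterating two orthogonal subdivision ideas: cutting the domain interval $I$ so as to control the \emph{measure} of the pieces, and cutting the \emph{range} so as to control the oscillation. First I would slice $I$ into $k$ consecutive subintervals $I_1',\dots,I_k'$ chosen so that each $E\cap I_m'$ carries at least a $\tfrac1k(1-\tfrac\ve2)$-fraction of $\abs E$, while leaving thin ``buffer'' gaps between consecutive blocks that absorb a total measure $\le \tfrac\ve2\abs E$; concretely, pick the cut points so that between $I_m'$ and $I_{m+1}'$ there is a gap of length at least $\abs E\ve/k$ that contains no point of $E$ in its interior, which is possible because $E$ is closed and we have $\tfrac\ve2\abs E$ of ``slack'' measure to spend on the $k-1$ gaps. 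This gives property \textbf{a} immediately and reduces the remaining measure of each $E\cap I_m'$ by at most the factor $(1-\tfrac\ve2)$, so $\abs{E\cap I_m'}\ge \tfrac{1-\ve/2}{k}\abs E$.

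Next, inside each $I_m'$ I would run the range-subdivision step $\lceil\log_2 l\rceil$ times, or more directly in one shot: let $[c_m,c_m+a_m]$ with $a_m\le a$ be the smallest interval containing $f(E\cap I_m')$, partition $[c_m,c_m+a_m]$ into $l$ equal subintervals $[c_m+\tfrac{i-1}{l}a_m,\,c_m+\tfrac il a_m]$, and let $E_m$ be the preimage under $f$ of whichever of these $l$ subintervals meets $E\cap I_m'$ in the largest measure. By pigeonhole $\abs{E_m}\ge \tfrac1l\abs{E\cap I_m'}\ge \tfrac{1-\ve/2}{kl}\abs E\ge\tfrac{1-\ve}{kl}\abs E$ (absorbing the $\ve/2$ versus $\ve$ discrepancy trivially), which is property \textbf{b}, and $\osc(f,E_m)\le a_m/l\le a/l$, which is property \textbf{c}. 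Each $E_m\subset I_m'=:I_m$, and the $I_m$ are disjoint with the required gaps, so property \textbf{a} persists. One should also take $E_m$ closed — replacing the half-open range subinterval by its closure costs nothing in oscillation and keeps $E_m$ closed as the intersection of the closed set $E\cap I_m'$ with a closed preimage.

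The step I expect to be slightly delicate is the buffer-gap construction for property \textbf{a}: one must verify that cut points exist which simultaneously (i) split $\abs E$ into nearly equal $k$-ths and (ii) sit in genuine gaps of $E$ of length $\ge\abs E\ve/k$. The clean way is to note that the distribution function $t\mapsto\abs{E\cap[0,t]}$ is continuous and nondecreasing with total increment $\abs E$; I would first locate ``soft'' cut levels at masses $\tfrac mk\abs E$, then perturb each cut point into a nearby maximal gap of $E$. If $E$ had no gap of length $\ge\abs E\ve/k$ near a given target level, then $E$ would be ``dense enough'' there that we can instead simply choose the interval endpoints inside $E^c$ arbitrarily close to the target and discard from the block a set of measure $<\abs E\ve/k$; summing the discarded measures over the $k$ blocks still costs at most $\ve\abs E$ total, so \textbf{b} survives with the stated constant $\tfrac{1-\ve}{kl}$. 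Everything else is a direct transcription of the two-step argument already used for Lemma 2.2, so no new idea beyond this bookkeeping is needed.
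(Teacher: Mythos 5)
Your proposal is correct and is essentially the paper's own proof: cut at the points where $t\mapsto\abs{E\cap[0,t]}$ reaches $\frac{m}{k}\abs{E}$, shave a buffer of length $\abs{E}\ve/k$ off each block (discarding whatever measure of $E$, at most $\abs{E}\ve/k$, falls inside it), then pigeonhole over an $l$-fold equipartition of the range $[\min_E f,\max_E f]$. The only superfluous element is the worry about finding buffer intervals free of points of $E$ (impossible when, e.g., $E$ has full measure): property \textbf{a} concerns the separation of the sets $E_m$, not gaps of $E$, so your ``fallback'' of simply discarding the portion of $E$ lying in each buffer is not a fallback but the intended argument.
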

\begin{proof}
Choose the increasing sequence $\{x_{m}\}$, $m=0,\dots,k$ 
so that 

\centerline{${\abs{E\cap [0,x_{m}]}=\abs{E}\frac{m}{k}}$,} 
\n and
let $y_{m}=x_{m}+\abs{E}\ve/k$. Write $I_{m}=[y_{m},x_{m+1}]$ and
$\tilde E_{m}=E\cap I_{m}$. 

Then 
$\abs{\tilde E_{m}}\ge \abs{E}\frac{1-\ve}{k}$.

\s
Let $J=[\min_{x\in E} f(x),\max_{x\in E} f(x)]$ (so that $\abs{J}=a$).
Divide $J$ into $l$ equal intervals, $J_{s}$, $s=1,\dots,l$,
and write $E_{m,s}=\tilde E_{m}\cap f\inv J_{s}$. For every $m$
let $s(m)$ be such that  
$\abs{ E_{m,s(m)}}\ge \abs{E}\frac{1-\ve}{kl}$,
and set $E_{m}=\tilde E_{m,s(m)}$.
\end{proof}

We refer to this as the \emph{$k,l,\ve$ procedure on $(I;E)$}, call
the pairs $(I_{m};E_{m})$ the (first generation)  descendants
of $(I;E)$ and rename them as $(I_{1,m};E_{1,m})$.

We rename the parameters  $k,l,\ve$ as  $k_{1},l_{1},\ve_{1}$,
and repeat the procedure  on each $(I_{1,m};E_{1,m})$
with parameters  $k_{2},l_{2},\ve_{2}$.
We have the second generation, with $k_{1}k_{2}$ descendants 
named
$(I_{2,m};E_{2,m})$, $m=1,\dots,k_{1}k_{2}$.

We iterate the procedure  repeatedly 
with parameters $k_{j},l_{j},\ve_{j}$ for the $j$'th round, and denote
\begin{equation}%\label{}
K_{n}=\prod_{j=1}^{n} k_{j},\quad L_{n}=\prod_{j=1}^{n} l_{j}\qua
\and \qua {\tiletab}_{n}=\prod_{1}^{n} (1-\ve_{j}).
\end{equation}

After $n$ iterations we have $K_{n}$ intervals 
$I_{n,m}$, each carrying a subset 
$E_{n,m}$ of $E$ such that
$\abs{E_{n,m}}\ge {\tiletab}_{n}K_{n}\inv L_{n}\inv\abs{E}$,
and any two are separated by intervals of length 
$\ge \ve_{n}\tiletab_{n-1}K_{n}\inv L_{n-1}\inv\abs{E}$. 

\s
Given $\alpha\in (0,1)$, 
we choose the parameters $k_{j}$, $l_{j}$ uniformly bounded,  
and $\ve_{j}\to 0$ so that
\begin{equation}\label{choice}\small
\alpha_{n}=\frac{\log L_{n}}{\log K_{n}+\log L_{n}-\log (\ve_{n}\tiletab_{n})}>\alpha,\quad 
\beta_{n}=\frac{\log K_{n}}{\log K_{n}+\log L_{n}-\log \tiletab_{n}}<1-\alpha,
\end{equation}
and $\alpha_{n}\to\alpha$, $\beta_{n}\to 1-\alpha$.

Denote
$E^{*}_{n}=\cup_{m=1}^{K_{n}}E_{n,m}$, 
observe that 
$E^{*}_{n}\subset E^{*}_{n-1}$, and
  set $E^{*}=\cap E^{*}_{n}$.

We claim that  $E^{*}$ satisfies the requirements of part {\bfit I} of the
theorem. To prove the claim we need to show 

\aba \quad $\hdim E^{*}\ge (1-\alpha)$.   

\bab \quad $f\restr{E^{*}}\in\Lip_{\alpha}$.

\begin{proof} 
For claim \aba we construct a probability
measure $\mu^{*}$ on $E^{*}$, such that  for every $\alpha'>\alpha$,
there exists a  constant $C=C(\alpha')$ such that
$\mu^{*}(I)\le C{\abs I}^{\alpha'}$ for all intervals $I$.
By  lemma \ref{Hausdorff} this proves $\hdim E^{*}\ge (1-\alpha)$.

\s
Denote by $\mu_{n}$ the probability measure obtained by 
normalizing 
the Lebesgue measure on $E_{n}^{*}$ by
multiplying it, on each  $E_{n,m}$, by a constant 
$c_{n,m}=K_{n}\inv\abs{E_{n,m}}\inv$, so that 
$\mu_{n}(E_{n,m})=K_{n}\inv$. 
The sequence $\{\mu_{n}\}$ converges in the weak-star topology
to a measure $\mu^{*}$ carried by $E^{*}$. Observe that 
$\mu^{*}(E_{n,m})=\mu_{n}(E_{n,m})=K_{n}\inv$.

We evaluate the modulus of continuity of the
primitive of $\mu^{*}$ by estimating the size of intervals $A$
such that $\mu^{*}(A)\ge 2 K_{n}\inv$. 
Such interval  must contain an  
interval $I_{n,m}$, and hence $E_{n,m}$, and it follows that
\begin{equation}
\abs{A}\ge\abs{I_{n,m}}\ge\abs{E_{n,m}}\ge  
{\tiletab}_{n}K_{n}\inv L_{n}\inv\abs{E}
\end{equation}
\n
which means that for every $\alpha'>\alpha$ 
we have for $n$ large enough and
every interval $I_{n,m}$  
\begin{equation}%\label{}
\mu^{*}(I_{n,m})\le\abs{I_{n,m}}^{\frac{\log K_{n}}{\log K_{n}+\log L_{n}-\log  \tiletab_{n}}}=\abs{I_{n,m}}^{\beta_{n}}\le\abs{I_{n,m}}^{1-\alpha'}
\end{equation} and it follows that 
for arbitrary intervals $I$ and any $\alpha'>\alpha$, as $\abs{I}\to 0 $ 
\begin{equation}%\label{}
\mu^{*}(I)=\osta{\abs{I}^{1-\alpha'}}
\end{equation}
which means that the Hausdorff dimension of $E^{*}$ is at least 
$1-\alpha$.

The modulus of continuity $\vt$ of $f\restr{E^{*}} $ is determined by:

``Let $x,y\in E^{*}$. Let $n$ be the smallest index such 
that $x,y$ are  not in the same $E_{n,m}$. 
Then $\abs{x-y}\ge  \ve_{n}\tilde{\tilde\etab}_{n}K_{n}\inv L_{n}\inv\abs{E}$ 
and $\abs{f(x)-f(y)}\le L_{n-1}^{-1}$.''
which translates to $\vt(\ve_{n}{\tiletab}_{n}K_{n}\inv L_{n-1}\inv\abs{E})\le L_{n-1}\inv$, or, for $t$ in this range
$\vt(t)=\osta{t^{\alpha_{n}}}$, and for all $t$
\begin{equation}
\vt(t)=\osta{t^{\alpha}}
\end{equation}
\save\end{proof}
\begin{remark}Reversing the inequalities in \eqref{choice} 
by an appropriate choice of the parameters
we obtain a set $E^{*}$ that has positive measure in dimension
$1-\alpha$, such that  the modulus of continuity of $f\restr{E^{*}}$ 
is bounded by $t^{\alpha}\abs{\log t}^{\alpha+\ve}$ as $t\to 0$.
\end{remark}

\subsection{Proof of theorem {3.1}, part {\bfit {II.}} }\hspace*{1pt}
As in section 2,  we write 
\begin{equation}\label{lacunar}
f(x)=\sum_{1}^{\infty} a_{j}\vp(\lambda_{j}x), \quad \text{and} \quad
f_{n}(x)=\sum_{1}^{n} a_{j}\vp(\lambda_{j}x)
\end{equation}
where $\vp$ is the 2-periodic function defined by \eqref{varphi},
$a_{j}$ is fast decreasing, $\lambda_{j}$ fast increasing.
Both $a_{j}$ and $\lambda_{j}$ depend on $\alpha$,
and will be defined inductively.

Choose (arbitrarily) $a_{1}=\half$, and $\lambda_{1}=10$.

Assuming $a_{j}$ and $\lambda_{j}$ have been 
chosen for $j\le n$, we
shall choose $a_{n+1}$ 
small (see below) and  then $\lambda_{n+1}$ a large
enough integral multiple of $\lambda_{n}$ so that:
\begin{equation}\label{condition1}
\lambda_{n}\,\mid\,\lambda_{n+1}, \;\;
\text{ and } \;\; a_{n+1}\lambda_{n+1}\ge 2 
\sum_{1}^{n}a_{j}\lambda_{j},
    \end{equation}
The divisibility guarantees that
that $f_{n}$ is linear in each of the intervals
$(\frac{j}{\lambda_{n}},\frac{j+1}{\lambda_{n}})$ and
the successive inequalities in \eqref{condition1} that
$\abs{\frac{d}{dt}f_{n}}\ge \half a_{n}\lambda_{n}>2^{n}$.

\sn Let $E$ be closed, and assume that 
$f\restr E\in\Lip_{\alpha} $.
Denote
\begin{equation*}
E_{n}=\set{x}{x\in E,\qua
\abs{f(x)-f(y)}\le%
n\abs{x-y}^{\alpha}\text{ for all }y\in E\text{ such that }%
\abs{x-y}\le \lambda_{n}\inv}.
    \end{equation*}
Clearly $E_{n}\subset E_{n+1}$, and \ 
$E^{*}=\lim  E_{n}  \supset E$.
 It suffices, therefore,  to show that 
$E_{n}$ can be covered by intervals $I_{j,n}$
such that $\sum_{j} \abs{I_{j,n}}^{\beta}<\ve_{n,\beta}$, 
with $\ve_{n,\beta}\to 0$ for every $\beta>1-\alpha$.

\s
Write $E_{n,j}= E_{n}\cap%
[\frac{j}{\lambda_{n}},\frac{j+1}{\lambda_{n}}]$.
For $x,y\in E_{n,j}$, and in particular the pair $x,y$ 
such that $E_{n,j}\subset [t,y]$ we have
\begin{equation}\label{estim}
n\abs{x-y}^{\alpha}\ge\abs{f(x)-f(y)}\ge
\half a_{n}\lambda_{n}\abs{x-y}-2a_{n+1}.
\end{equation}
If $a_{n+1}$ is small enough, this implies
$\abs{x-y}^{1-\alpha}\le\frac{2n}{ a_{n}\lambda_{n}}$, and 
$E_{n}$ can be covered by $\lambda_{n}$ intervals $I_{j,n}$ of length
$\abs{I_{j,n}}\le\bigbra{\frac{2n}{ a_{n}\lambda_{n}}}^{\frac{1}{1-\alpha}}$.
\s
For any $\beta$,
\begin{equation}\label{}
\abs{I_{j,n}}^{\beta}\le\Bigl(\frac{2n}{ a_{n}\lambda_{n}}\Bigr)^{\frac{\beta}{1-\alpha}}
,\text{\quad  and\quad }\sum\abs{I_{j,n}}^{\beta}\le
\Bigl(\frac{2n}{ a_{n}}\Bigr)^{\frac{\beta}{1-\alpha}}
\lambda_{n}^{1-\frac{\beta}{1-\alpha}}.
    \end{equation}
For $\beta>1-\alpha$ the exponent of $\lambda_{n}$ is negative,
and we take $\lambda_{n}$ big enough (after choosing $a_{n}$).

This concludes the proof of theorem \ref{3.1}.\qed

\section{Lipschitz  and monotone restrictions}

\subsection{Lipschitz restrictions. }

Part {\bfit {II}} of theorem 3 indicates that there 
are continuous functions $f$ such that if $f\restr E\in\Lip_{1}$
then $\hdim E=0$. The following refinement shows that even if $f$ is
``almost'' $\Lip_{1}$, the set $E$ can be ``arbitrarily'' thin. 
\begin{theorem}\label{Lip1}
Given a Hausdorff determining function $h$, 
and a modulus of continuity $\omega$
such that $\lim_{s\to 0} \omega(s)/s=\infty$,
there exist functions $f\in C_{\omega}$ such
that if $f\restr E\in \Lip_{1}$,
then $E$ has zero $h$-measure.
         \end{theorem}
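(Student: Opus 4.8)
The plan is to build $f$ as a lacunary-type series $f=\sum_j a_j\vp(\lambda_j x)$ exactly as in the proof of Theorem~3.1 part~\textbf{II}, but with the amplitudes $a_j$ tuned to the given modulus $\omega$ rather than to a power $t^\alpha$, and the frequencies $\lambda_j$ chosen so fast that the covering estimate forces $h$-measure zero for any set on which $f$ is Lipschitz. Concretely, I would first arrange, as in \eqref{condition1}, that $\lambda_n\mid\lambda_{n+1}$ and $a_{n+1}\lambda_{n+1}\ge 2\sum_1^n a_j\lambda_j$, so that $f_n$ is piecewise linear on the intervals $(j/\lambda_n,(j+1)/\lambda_n)$ with $|f_n'|\ge \tfrac12 a_n\lambda_n$, and (since the tail $\sum_{l>n}a_l\le 2a_{n+1}$ is controlled) the slope of $f$ on such an interval, up to a bounded error $2a_{n+1}$, is $\gtrsim a_n\lambda_n$. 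To make $f\in C_\omega$ I would impose $a_j\lambda_j\le \omega(\lambda_j^{-1})/\lambda_j^{-1}$ (i.e. $a_j\le\omega(\lambda_j^{-1})$ roughly), using concavity of $\omega$ to pass from estimates at scale $\lambda_j^{-1}$ to estimates at all scales and to sum the contributions of the different frequencies; the hypothesis $\omega(s)/s\to\infty$ is exactly what makes the room $a_n\lambda_n\to\infty$ available, which is what the Lipschitz restriction will exploit.

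Next, assume $E$ is closed and $f\restr E\in\Lip_1$, say with Lipschitz constant bounded, on a neighborhood of each point, by a constant depending on the point; as in Section~3 I would first reduce to the sets $E_n=\{x\in E: |f(x)-f(y)|\le n|x-y|\text{ for all }y\in E,\ |x-y|\le\lambda_n^{-1}\}$, which increase to a superset $E^*\supset E$, so it suffices to cover each $E_n$ efficiently. On $E_{n,j}=E_n\cap[j/\lambda_n,(j+1)/\lambda_n]$ the pair of extreme points $x,y$ satisfies, by the same computation as \eqref{estim}, $n|x-y|\ge |f(x)-f(y)|\ge \tfrac12 a_n\lambda_n|x-y|-2a_{n+1}$; choosing $a_{n+1}$ small (after $a_n,\lambda_n$ are fixed) this gives $|x-y|\le 4n/(a_n\lambda_n)$. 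Thus $E_n$ is covered by $\lambda_n$ intervals of length $\le 4n/(a_n\lambda_n)$.

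Finally, to get $h$-measure zero I would estimate $\sum_j h(|I_{j,n}|)\le \lambda_n\, h\!\bigl(4n/(a_n\lambda_n)\bigr)$ and require this to tend to $0$. Since $h$ is nondecreasing with $h(0)=0$ and $4n/(a_n\lambda_n)\to 0$ (provided $a_n\lambda_n\to\infty$ fast enough, which we are free to arrange because $\omega(s)/s\to\infty$ only forces $a_n\lambda_n$ to be \emph{at most} as large as $\omega(\lambda_n^{-1})\lambda_n\to\infty$, and we may always make it smaller and $\lambda_n$ larger), we can, at the inductive step where $\lambda_n$ is chosen, simply take $\lambda_n$ large enough that $\lambda_n\,h(4n/(a_n\lambda_n))<2^{-n}$: this is possible since $h$ is continuous at $0$, so $h(4n/(a_n\lambda_n))\to0$ as $\lambda_n\to\infty$ with $a_n$ fixed, and one checks $\lambda_n h(4n/(a_n\lambda_n))$ can be driven below any prescribed bound by the same mechanism used for the power case (there $\lambda_n^{1-\beta/(1-\alpha)}\to0$; here the role of the exponent gap is played by the extra decay of $h$). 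Hence $E_n$, and so $E^*\supset E$, has zero $h$-measure.

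The main obstacle is the bookkeeping in the two competing demands on $(a_n,\lambda_n)$: $f\in C_\omega$ caps $a_n$ from above in terms of $\omega$ and $\lambda_n$, while the covering bound $\lambda_n\,h(4n/(a_n\lambda_n))\to0$ wants $a_n\lambda_n$ large and, simultaneously, wants $\lambda_n$ not too large relative to the decay of $h$. Resolving this requires choosing, at stage $n$, first $a_n$ small enough to respect \eqref{condition1} for the previous stage and the $C_\omega$ bound, and then $\lambda_n$ as a large multiple of $\lambda_{n-1}$ chosen last — large enough both to enforce the linearity/slope conditions and to make $\lambda_n h(4n/(a_n\lambda_n))<2^{-n}$ — which is legitimate precisely because, with $a_n$ fixed, $h(4n/(a_n\lambda_n))\to0$ while we retain full freedom in $\lambda_n$; the hypothesis $\omega(s)/s\to\infty$ enters only to guarantee there is any such room at all, i.e. that the $C_\omega$ constraint does not collapse $a_n\lambda_n$ to a bounded quantity.
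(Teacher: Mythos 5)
Your overall architecture is the paper's: the same lacunary series $\sum a_j\vp(\lambda_j x)$, the same divisibility/slope conditions, the same reduction to the sets $E_n$ and the same inequality $n\abs{x-y}\ge \tfrac12 a_n\lambda_n\abs{x-y}-2a_{n+1}$ on each $E_{n,j}$. But there is a genuine gap at the covering step, and it is exactly the point on which the whole theorem turns. From that inequality the correct conclusion is $\abs{x-y}\le 4a_{n+1}/(a_n\lambda_n-2n)$; the free parameter $a_{n+1}$ must stay in the numerator. You replace this by $\abs{x-y}\le 4n/(a_n\lambda_n)$ and then try to force $\lambda_n\,h\bigl(4n/(a_n\lambda_n)\bigr)\to 0$ by taking $\lambda_n$ large. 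That cannot work for an arbitrary determining function $h$: nothing prevents $h$ from tending to $0$ arbitrarily slowly (say $h(t)=1/\log(1/t)$), in which case $h\bigl(4n/(a_n\lambda_n)\bigr)$ decays far too slowly to beat the factor $\lambda_n$, and your sum $\lambda_n h(\cdot)$ tends to infinity rather than to $0$. The failure is even worse than that: under your own constraint $a_n\le\omega(1/\lambda_n)$ with $\omega$ close to linear (e.g.\ $\omega(s)=s\log(1/s)$), one has $a_n\lambda_n\le\lambda_n\omega(1/\lambda_n)=\log\lambda_n$, so $4n/(a_n\lambda_n)\ge 4n/\log\lambda_n$ does not even tend to $0$ along any admissible choice of parameters, and your ``covering intervals'' are longer than the trivial partition into $\lambda_n$ intervals of length $1/\lambda_n$. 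Your appeal to ``the same mechanism as in the power case'' has no content here because there is no exponent gap to exploit when $h$ is arbitrary.

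The paper's resolution, which your write-up discards, is that the length of the covering intervals is $l_n=4a_{n+1}/(a_n\lambda_n-2n)$, and $a_{n+1}$ is chosen \emph{after} $a_n$ and $\lambda_n$ are fixed and \emph{before} $\lambda_{n+1}$. Since $a_{n+1}$ is only required to be small, one simply takes it so small that $\lambda_n h(l_n)<n^{-n}$; continuity of $h$ at $0$ suffices, no decay rate of $h$ is needed, and no largeness of $\lambda_n$ is invoked at this step. The role of $\lim_{s\to0}\omega(s)/s=\infty$ is then only to keep the constraint $a_{n+1}\le\omega(1/\lambda_{n+1})$ compatible with $a_{n+1}\lambda_{n+1}\ge 2\sum_1^n a_j\lambda_j$ at the next stage. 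Restoring $a_{n+1}$ to the numerator of your covering bound, and moving the smallness requirement from $\lambda_n$ to $a_{n+1}$, repairs the proof and brings it into line with the paper's.
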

 Notice that the assumption   $\lim_{s\to 0} \omega(s)/s=\infty$,
allows $\omega(s)=\osta{s^{\alpha}}$ for all $\alpha<1$.
The corresponding $f\in C_{\omega}$ belongs to $\Lip_{\alpha}$
for all $\alpha<1$.

\begin{proof} 
We use again the series \eqref{lacunar}, namely
\begin{equation*}
f=\sum_{1}^{\infty} a_{j}\vp(\lambda_{j}x),
\end{equation*}
and adapt the parameters $a_{n}$ and $\lambda_{n}$ to the current
context. Both $a_{j}$ and $\lambda_{j}$ will be defined inductively,
$a_{j}$ will be fast decreasing, $\lambda_{j}$ fast increasing.

Denote by 
$\omega_{n}(s)=\max_{x,\;\abs \tau\le s}a_{n}
\abs{\vp(\lambda_{n}(x+\tau))-\vp(\lambda_{n}(x))} $, 
  the modulus of continuity of
$a_{n}\vp(\lambda_{n}x)$. The condition
$\sum_{n} \omega_{n}(s)=\osta{ \omega(s)}$, as $s\to 0$, 
guarantees that $f\in C_{\omega}$. Observe that
\begin{equation}%\label{}
\omega_{n}(s)=\min (a_{n}, a_{n}\lambda_{n}s)=
\begin{cases} a_{n}& \text{ if } s>\lambda_{n}\inv\\
a_{n}\lambda_{n}s&\text{if } 0\le s\le \lambda_{n}\inv.
\end{cases}
\end{equation}

\textbf {i.} The first condition we impose on $a_{n},\lambda_{n}$ 
is: $a_{n}\le \omega(1/\lambda_{n}) $.
It implies that 
$\omega_{n}(s)\le\min (a_{n},\omega(s))$ for all $s$.
As $\omega(1/\lambda)>>1/\lambda$, the condition is consistent with having $a_{n}\lambda_{n}$ 
arbitrarily large.

\textbf{ ii.} Given $a_{n}$ and $\lambda_{n}$,  
define  $c_{n}$ by the condition 
$\omega(c_{n})= 2^{n}a_{n}\lambda_{n}c_{n}=%
2^{n}\omega_{n}(c_{n})$. This implies 
that    for $s\le c_{n}$ we have
$\omega(s)\ge 2^{n}a_{n}\lambda_{n}s$  and 
\begin{equation}%\label{}
\omega_{n}(s)\le
\begin{cases} a_{n}& \text{ if } s>c_{n}\\
2^{-n}\omega(s)&\text{if } s\le c_{n}.
\end{cases}
\end{equation}
so that  
for  $c_{n+1}\le s\le c_{n}$  we have  
$\sum \omega_{j}(s)\le \omega(s)+\sum_{j=n+1}^{\infty} a_{j}$.
It follows that 
 if $a_{n}$ decreases fast enough  (while $\lambda_{n}$ increases, 
 allowing $a_{n}\lambda_{n}$ to be as large as is needed),
 we have indeed $f\in  C_{\omega}$. %and 

\textbf{iii.} Assuming $a_{j}$ and $\lambda_{j}$ have been 
chosen for $j\le n$, we
shall choose $a_{n+1}$ 
small (see below) and  then $\lambda_{n+1}$ a large
enough integral multiple of $\lambda_{n}$ so that:
\begin{equation}\label{condition}
\lambda_{n}\,\mid\,\lambda_{n+1}, \;\;
\text{ and } \;\; a_{n+1}\lambda_{n+1}\ge 2 
\sum_{1}^{n}a_{j}\lambda_{j},
    \end{equation}
The divisibility guarantees that
that $f_{n}$ is linear in each of the intervals
$(\frac{j}{\lambda_{n}},\frac{j+1}{\lambda_{n}})$ and
the successive inequalities in \eqref{condition} that
$\abs{\frac{d}{dt}f_{n}}\ge \half a_{n}\lambda_{n}>>2^{n}$.

Let $E$ be closed, and assume that $f\restr E\in\Lip_{1} $.
Denote
\begin{equation*}
E_{n}=\set{x}{x\in E,\qua \abs{f(x)-f(y)}\le%
n\abs{x-y}\text{ for all }y\in E\text{ such that }%
\abs{x-y}\le \lambda_{n}\inv}.
    \end{equation*}
Clearly $E_{n}\subset E_{n+1}$, and \ 
$E^{*}=\lim  E_{n}  \supset E$.
 It suffices, therefore,  to show that 
$E_{n}$ can be covered by intervals $I_{j,n}$
such that $\sum_{j} h(\abs{I_{j,n}})<\ve_{n}$, with $\ve_{n}\to 0$.

\s
Write $E_{n,j}= E_{n}\cap%
[\frac{j}{\lambda_{n}},\frac{j+1}{\lambda_{n}}]$.
If $x,y\in E_{n,j}$ then
\begin{equation}\label{estim}
n\abs{x-y}\ge\abs{f(x)-f(y)}\ge
\half a_{n}\lambda_{n}\abs{x-y}-2a_{n+1}
\end{equation}
which implies
$\abs{x-y}\le 4a_{n+1}/(a_{n}\lambda_{n}-2n)$.
It follows that $E_{n}$ can be covered by $\lambda_{n}$ arcs
of length bounded by 
$l_{n}=4a_{n+1}/(a_{n}\lambda_{n}-2n)<%
5a_{n+1}/a_{n}\lambda_{n}$.

Choose $a_{n+1}$ small enough so  that
$\lambda_{n}h(l_{n})< n^{-n}$, %
and then $\lambda_{n+1}$ appropriate to guarantee \eqref{condition}.
\remark The proof shows, in fact, that $E$ is Minkowski $h$-null.
\end{proof}
\subsection{Monotone restrictions. }
Does there exist a  function $f\in C([0,\,1)$  such that 
if $f\restr{E}$ is monotone then $E$ has Hausdorff dimension $0$?

\begin{theorem}%\label{}
Given a Hausdorff
determining function $h$, there exists  $f\in C([0,\,1])$ such
that if $f\restr{E}$ is monotone,
then $E$ has zero $h$-measure.
\end{theorem}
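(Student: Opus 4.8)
The plan is to reuse the lacunary-series construction from Theorem 4.1, but to adapt it so that the obstruction is to monotonicity rather than to a Lipschitz condition. Write again $f=\sum_1^\infty a_j\vp(\lambda_j x)$ with $\vp$ the $2$-periodic tent function of \eqref{varphi}, $a_j$ fast decreasing and $\lambda_j$ a fast increasing sequence of integers with $\lambda_n\mid\lambda_{n+1}$ and $a_{n+1}\lambda_{n+1}\ge 2\sum_1^n a_j\lambda_j$, so that the partial sum $f_n$ is piecewise linear on intervals of length $\lambda_n^{-1}$ with $\abs{f_n'}\ge \half a_n\lambda_n$, while the tail $\sum_{j>n}a_j\vp(\lambda_j x)$ has sup-norm $<2a_{n+1}$, negligible compared to the oscillation of a single linear piece of $f_n$ once $a_{n+1}$ is small. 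The point is that on each interval $[j/\lambda_n,(j+1)/\lambda_n]$ the function $f$ is a steep ``sawtooth'': it goes up by roughly $a_n$ then down by roughly $a_n$ (one full period of $\vp(\lambda_n\cdot)$ spans $1/\lambda_n$), perturbed by at most $2a_{n+1}$. A monotone restriction therefore cannot live on a long sub-piece of such an interval.

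**Key steps.** First, quantify the sawtooth obstruction: if $E$ is closed and $f\restr E$ is monotone, say increasing, and $x<y<z$ lie in $E\cap[j/\lambda_n,(j+1)/\lambda_n]$ with $x$ before a ``down-slope'' of $f_n$ and $z$ after it, then $f(y)$ is pushed above $f(x)$ and $f(z)$ is pulled below $f(y)$ by the linear part, up to the $2a_{n+1}$ tail error; choosing $a_{n+1}$ small relative to $a_n$ forces $f(z)<f(y)$, contradicting monotonicity. Consequently $E\cap[j/\lambda_n,(j+1)/\lambda_n]$ must be confined, apart from its two endpoints, to a region where $f_n$ is monotone — i.e.\ to an interval of length $\le 2/\lambda_n$ around a single up-slope or down-slope (there are boundedly many monotone pieces of $f_n$ inside one $1/\lambda_n$-interval, since $\lambda_n/\lambda_{n-1}$ is an integer and $f_{n-1}$ contributes at most a bounded slope relative to $a_n\lambda_n$). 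Actually the cleanest bookkeeping is to work directly with $f$ and the stage-$n$ partial sum: on a monotone piece $E$ of $f$, inside each $1/\lambda_n$-interval the restriction $f\restr E$ can occupy at most two ``monotone runs'' of $f_n$, hence $E_{n,j}:=E\cap[j/\lambda_n,(j+1)/\lambda_n]$ is covered by at most $2$ subintervals on each of which $\abs{x-y}^{}\cdot\half a_n\lambda_n \le \osc(f,E_{n,j})+2a_{n+1}$; but monotonicity of $f\restr E$ on all of $[0,1]$ bounds $\sum_j\osc(f,E_{n,j})\le \var(E,f)\le \osc(f,[0,1])=:M<\infty$. Summing, the total length of $E_n$ (the set of $x\in E$ where monotonicity ``holds at scale $\lambda_n^{-1}$'') is at most $\osta{(M+2a_{n+1})/(a_n\lambda_n)}$ spread over $\lambda_n$ intervals — so $E$ is covered by $\lambda_n$ intervals, but with the crucial feature that only $\osta{\lambda_n M/(a_n\lambda_n)}=\osta{M/a_n}$ of them can be ``long''. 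To get the $h$-smallness we instead argue as in Theorem 4.1: $E\subset E^*=\bigcup_n E_n$ where $E_n$ is the set of $x$ at which the monotone estimate $\abs{f(x)-f(y)}\le n\abs{x-y}$... — rather, we define $E_n$ as those $x\in E$ such that every $y\in E$ with $\abs{x-y}\le\lambda_n^{-1}$ has $f(y)$ between $f$ of $x$-type neighbours, and show $E_n$ is covered by $\lambda_n$ intervals of length $\le l_n=\osta{a_{n+1}/(a_n\lambda_n)}$, then pick $a_{n+1}$ so small that $\lambda_n h(l_n)<n^{-n}$ and finally $\lambda_{n+1}$ to satisfy \eqref{condition}. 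Since $E\subset E_n$ for $n$ depending on $E$ — no: $E\subset\bigcup_n E_n$ and $E_n$ increasing, so $E\subset E_n$ eventually is false; instead for each $\ve$ choose $n$ with $n^{-n}<\ve$, note $E\subset E^*=\lim E_n$ is not a cover by one $E_n$, so we cover $E$ by covering each $E_n$ and using $\sum_n\lambda_n h(l_n)<\sum_n n^{-n}<\infty$, refining: for zero $h$-measure it is enough that $\inf_n \lambda_n h(l_n)=0$, which holds. The remark of Theorem 4.1 then again upgrades this to Minkowski $h$-null.

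**Main obstacle.** The delicate point is the ``monotone run'' count: I must show that a monotone restriction of $f$, inside one period-interval of the active frequency $\lambda_n$, is genuinely trapped near a single monotone branch of $f_n$, uniformly in the contribution of the lower-frequency terms $f_{n-1}$ and the higher-frequency tail. The lower terms are handled by $\abs{f_{n-1}'}\le \sum_1^{n-1}a_j\lambda_j\le \half a_n\lambda_n$ (from \eqref{condition} at the previous stage), so $f_n=f_{n-1}+a_n\vp(\lambda_n\cdot)$ still has $\abs{f_n'}$ bounded below on each linear piece and changes monotonicity type exactly where $\vp(\lambda_n\cdot)$ does, i.e.\ at the $1/(2\lambda_n)$ grid; the tail is absorbed into the $2a_{n+1}$ term. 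So the argument is structurally identical to Theorem 4.1 with ``$\Lip_1$'' replaced by ``monotone'' and ``$\abs{f(x)-f(y)}\le n\abs{x-y}$'' replaced by a three-point monotonicity inequality, and the remaining work is the routine parameter chase, which I would not spell out. \qed
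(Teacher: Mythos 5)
There is a genuine gap, and it sits exactly where the paper warns that one must ``give up the building block $\vp$'': the symmetric tent cannot be made to work for monotone restrictions, and neither of your two patches closes the hole. Monotonicity is a \emph{one-sided} constraint: for $x<y$ in $E$ it gives $f(y)\ge f(x)$, which is perfectly compatible with $f$ increasing steeply. Hence inside one period of $\vp(\lambda_n\cdot)$ the set $E$ is only forced to stay near a single monotone branch of $f_n$ --- as you say yourself, an interval of length comparable to $\lambda_n^{-1}$. Your claimed cover of $E_n$ by $\lambda_n$ intervals of length $l_n=\osta{a_{n+1}/(a_n\lambda_n)}$ is therefore false: in Theorem 4.1 that bound comes from the \emph{upper} estimate $\abs{f(x)-f(y)}\le n\abs{x-y}$, which has no analogue here, and $E_{n,j}$ can genuinely have diameter $\sim\lambda_n^{-1}$ (an entire up-slope). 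The honest cover is thus $\sim\lambda_n$ intervals of length $\sim\lambda_n^{-1}$, and $\sum h(\abs{I_{j,n}})\sim\lambda_nh(1/\lambda_n)$, which does not tend to $0$ for a general determining function $h$ (it equals $1$ for $h(t)=t$ and diverges for $h(t)=t^{c}$, $c<1$); no choice of $a_n,\lambda_n$ repairs this, since the obstruction is the shape of the block, not the lacunarity. Your fallback via $\sum_j\osc(f,E_{n,j})\le\var(E,f)\le\osc(f,[0,1])$ controls only the \emph{total} length of the covering intervals; that yields Lebesgue measure zero but not zero $h$-measure, because a cover by $\lambda_n$ intervals of total length $o(1)$ can still have $\sum h(\abs{I_j})\to\infty$ whenever $h(t)/t\to\infty$.

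The paper's proof fixes precisely this by changing the building block to the \emph{asymmetric} sawtooth $\psi_{m}$: $1$-periodic, rising linearly from $0$ to $1$ on $[0,m^{-1}]$ and descending gently on $[m^{-1},1]$, used with alternating orientations $(-1)^j$ and the condition $a_n\lambda_n\ge 2\sum_{1}^{n-1}a_jm_j\lambda_j$. Then, for $f\restr E$ increasing and $n$ even, each period contributes only (i) the short ``fast'' interval, whose length is controlled by the new parameter $m_n$, and (ii) a piece of the ``slow'' decreasing part of diameter $\le a_{n+1}/(a_n\lambda_n)$, because there $f$ drops at rate $\sim a_n\lambda_n$ while the restriction must rise up to the tail error. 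One then chooses $a_{n+1}$ so that $\lambda_nh\bigl(a_{n+1}/(a_n\lambda_n)\bigr)\to0$ and $m_n$ (after $\lambda_n$) so that $\lambda_nh(m_n^{-1})\to0$. The extra parameter $m_n$, which shrinks the increasing portion of each period to an arbitrarily small fraction, is exactly the ingredient your proposal lacks; the alternating signs are what handle decreasing restrictions as well. If you want to salvage your write-up, replace $\vp$ by $\psi_{m_n}$ and the rest of your bookkeeping (the increasing $E_n$, the summable $n^{-n}$ bounds) goes through as in Theorem 4.1.
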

\begin{proof}
Now we have to
give up the building block  $\vp$  defined in  (9) and the corresponding
functions  $\vp_{n}$.  Let us denote
 by $\psi_{m}(x)$ the 1-periodic function satisfying:  $\psi_{m}(0)=\psi(1)=0$, $\psi_{m}(m\inv)=1$ and
$\psi_{m}(x)$ linear on
$[0,\,m\inv]$ and on $[m\inv,\,1]$.

Write $f=\sum_{1}^{\infty} a_{j}\psi_{m_{j}}((-1)^{j}\lambda_{j}x)$
and $f_{n}=\sum_{1}^{n} a_{j}\psi_{m_{j}}((-1)^{j}\lambda_{j}x)$,
where $a_{j}$, $m_{j}$,
and $\lambda_{j}$ will be defined inductively.

The first conditions are
\begin{equation}\label{condition3}
m_{j-1}\lambda_{j-1}\,\mid\, m_{j}\lambda_{j}, \;\;
\text{ and } \;\; a_{n}\lambda_{n}\ge 
2 \sum_{1}^{n-1}a_{j}m_{j}\lambda_{j},
    \end{equation}
so that $f_{n}$ is linear in each of the intervals ($n$-intervals)
$(\frac{j}{m_{n}\lambda_{n}},\frac{j+1}{m_{n}\lambda_{n}})$.
Each such
 \linebreak interval is divided in the next generation into one ``fast''
interval on which\linebreak
$\abs{\frac{d}{dt}f_{n+1}}\sim a_{n+1} m_{n+1}\lambda_{n+1}$
and the union of the remaining  ``slow'' intervals on which
$\abs{\frac{d}{dt}f_{n+1}}\sim a_{n+1}\lambda_{n+1}$.

For even $n$ (resp. odd $n$) $f_{n}$ is increasing 
(resp. decreasing) on the fast intervals and decreasing  (resp. increasing) on the unions of the slow ones contained 
in an $(n-1)$-interval.

\s
Let $E$ be closed, $f\restr{E}$ monotone increasing.
Let $n$ be even.
Then, if $J$ is the slow part of an $n$-interval, the diameter
of $J\cap E$ is bounded by $a_{n+1}/a_{n}\lambda_{n}$.
The number of such $J$'s is $\lambda_{n}$.
Choose $a_{n+1}$ such that 
$\lambda_{n} h(a_{n+1}/a_{n}\lambda_{n})\to 0$.

 $E\setminus \bigcup J $ is covered by the union 
 of the fast $n$-intervals
that is $\lambda_{n}$ intervals of length $m_{n}\inv$.
Choose $m_{n}$ (after choosing $\lambda_{n}$) 
so that $\lambda_{n}h(m_{n}\inv)\to 0$.
\end{proof}

\section{Restrictions of H\"older functions}
\subsection{Smoothness. }
\begin{theorem}
Assume that $0<\beta<\alpha<1$. 
There exist functions 
$f\in\Lip_{\beta}$
such that if $f\restr E\in \Lip_{\alpha}$, 
then $E$ has Hausdorff dimension bounded by 
$\frac{1-\alpha}{1-\beta}$.
\end{theorem}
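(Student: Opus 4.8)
I would prove this by the same lacunary-series construction used for Theorem~3.1 part~\textbf{II} and Theorem~\ref{Lip1}, but now choosing the amplitudes $a_j$ so that the resulting sum lies in $\Lip_\beta$ rather than merely in $C$ or in $C_\omega$. Write $f=\sum_{j\ge 1} a_j\vp(\lambda_j x)$ with $\vp$ the $2$-periodic tent of \eqref{varphi}, $\lambda_j$ a fast-increasing sequence of integers with $\lambda_n\mid\lambda_{n+1}$, and $a_j$ to be chosen. The building block $a_n\vp(\lambda_n x)$ has modulus of continuity $\min(a_n,a_n\lambda_n s)$, so $f\in\Lip_\beta$ will be guaranteed by the single standing requirement $a_n\lambda_n^{\beta}\le c\,2^{-n}$ for a fixed constant $c$ (this makes $\sum_n \omega_n(s)=O(s^\beta)$ exactly as in the proof of Theorem~\ref{Lip1}, condition \textbf{i}). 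Crucially this still leaves room to take $a_n\lambda_n$ as large as we like, which is what drives the lower bound on the derivative of the partial sum.

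\textbf{Key steps.} First, fix the divisibility and domination conditions $\lambda_n\mid\lambda_{n+1}$ and $a_{n+1}\lambda_{n+1}\ge 2\sum_{1}^{n}a_j\lambda_j$, so that $f_n=\sum_1^n a_j\vp(\lambda_j x)$ is piecewise linear on the intervals $(\tfrac{k}{\lambda_n},\tfrac{k+1}{\lambda_n})$ with $|\tfrac{d}{dt}f_n|\ge \tfrac12 a_n\lambda_n$ there, while the tail $\sum_{j>n}a_j$ is controlled by $2a_{n+1}$. Next, suppose $E$ is closed with $f\restr E\in\Lip_\alpha$; set $E_n=\{x\in E: |f(x)-f(y)|\le n|x-y|^\alpha \text{ whenever }y\in E,\ |x-y|\le\lambda_n^{-1}\}$, so $E_n\uparrow E^{*}\supset E$ and it suffices to cover each $E_n$ efficiently. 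On each $\lambda_n^{-1}$-interval $[\tfrac{k}{\lambda_n},\tfrac{k+1}{\lambda_n}]$, the two-point comparison $n|x-y|^\alpha\ge |f(x)-f(y)|\ge\tfrac12 a_n\lambda_n|x-y|-2a_{n+1}$ (valid after choosing $a_{n+1}$ small) gives $|x-y|^{1-\alpha}\le 3n/(a_n\lambda_n)$ for $x,y\in E_{n}$ in the same such interval; hence $E_n$ is covered by $\lambda_n$ intervals $I_{k,n}$ of length $\le (3n/(a_n\lambda_n))^{1/(1-\alpha)}$.

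\textbf{The dimension count.} For $\gamma>0$,
\begin{equation*}
\sum_k |I_{k,n}|^{\gamma}\ \le\ \lambda_n\Bigl(\frac{3n}{a_n\lambda_n}\Bigr)^{\frac{\gamma}{1-\alpha}}
=\ (3n)^{\frac{\gamma}{1-\alpha}}\,a_n^{-\frac{\gamma}{1-\alpha}}\,\lambda_n^{\,1-\frac{\gamma}{1-\alpha}}.
\end{equation*}
Using the $\Lip_\beta$ constraint in the (sharp) form $a_n=c\,2^{-n}\lambda_n^{-\beta}$, this becomes a constant times $n^{\gamma/(1-\alpha)}2^{n\gamma/(1-\alpha)}\lambda_n^{\,1-\frac{\gamma(1-\beta)}{1-\alpha}}$. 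The exponent of $\lambda_n$ is negative precisely when $\gamma>\frac{1-\alpha}{1-\beta}$, and then, having already fixed $a_n$ and $n$, we take $\lambda_n$ large enough that the whole sum is $<\varepsilon_n\to 0$. By the covering definition of Hausdorff dimension this forces $\hdim E^{*}\le\frac{1-\alpha}{1-\beta}$, hence $\hdim E\le\frac{1-\alpha}{1-\beta}$.

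\textbf{Main obstacle.} The only delicate point is the bookkeeping of the inductive choices: at stage $n$ we must pick $a_{n+1}$ small enough simultaneously for (i) the tail domination $a_{n+1}\lambda_{n+1}\ge 2\sum_1^n a_j\lambda_j$ to be achievable with $\lambda_{n+1}$ an integer multiple of $\lambda_n$, (ii) the error term $2a_{n+1}$ in the two-point estimate to be negligible against $\tfrac12 a_n\lambda_n|x-y|$ on the relevant scale, and then $\lambda_{n+1}$ large enough for both the divisibility/domination and the covering bound $\sum_k|I_{k,n+1}|^{\gamma}<\varepsilon_{n+1}$ — all while respecting $a_{n+1}=c\,2^{-(n+1)}\lambda_{n+1}^{-\beta}$, which couples $a_{n+1}$ to $\lambda_{n+1}$. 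One checks that the order of choice "$a_{n+1}$ first (small), then $\lambda_{n+1}$ (large)" makes every requirement compatible, exactly as in the proof of Theorem~3.1 part~\textbf{II}; nothing new conceptually arises, only the exponent $\beta$ in the amplitude constraint, which is what shifts the critical exponent from $1-\alpha$ to $\frac{1-\alpha}{1-\beta}$.
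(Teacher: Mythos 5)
Your proposal is correct and follows essentially the same route as the paper: the lacunary tent-series of Theorem 3.1 part \textbf{II} with the amplitudes constrained by $a_n=O(\lambda_n^{-\beta})$ to force $f\in\Lip_\beta$, followed by the same covering count $\sum_k|I_{k,n}|^{\gamma}\le (Cn/a_n)^{\gamma/(1-\alpha)}\lambda_n^{1-\gamma/(1-\alpha)}$, whose exponent of $\lambda_n$ becomes negative exactly for $\gamma>\frac{1-\alpha}{1-\beta}$. Your version merely spells out the inductive bookkeeping that the paper leaves implicit (and your exponent arithmetic is the correct one, $1-\frac{\gamma(1-\beta)}{1-\alpha}$).
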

\begin{proof}
We keep the notations used in the proof of theorem \ref{Lip1}. As
observed there,
the condition $f\in\Lip \beta$ is equivalent to
 $a_{n}=\ostaB {\lambda_{n}^{-\beta}}$ (if $\lambda_{n}$ grows 
fast enough). 
 Now $a_{n}^{-\frac{\alpha'}{1-\alpha}}
\lambda_{n}^{1-\frac{\alpha'}{1-\alpha}}=\ostaB{\lambda_{n}^{-\beta\frac{\alpha'}{1-\alpha}+1-\frac{\alpha'}{1-\alpha}}}$ 
and the exponent is negative if $\alpha'>\frac{1-\alpha}{1-\beta}$.
 \end{proof}

\b
\n
\bfit{ Question. }Is the following statement valid?

\emph{
Assume $0<\beta<\alpha<1$. If $f\in\Lip_{\beta}$ there exists a set
$E$ such that $\hdim E=\frac{1-\alpha}{1-\beta}$, and 
$f\restr E\in \Lip_{\alpha}$.}

\subsection{Bounded variation. }
For $\alpha\in (0,1)$, denote by 
$\norm{\ }_{\alpha}$ the $\Lip_{\alpha}$ norm.
It is easy to see that
$\norm{a\vp_{n}}_{\alpha}\sim an^{\alpha}$ and if $n_{k}$
increases fast enough, say $n_{k+1}>2 n_{k}$,
then $\sum a_{k}\vp_{n_{k}}\in\Lip_{\alpha}$ if, and only if,
 $a_{k}=\osta{n_{k}^{-\alpha}}$.
\begin{theorem}
 There exists real-valued  functions $F\in \Lip_{\alpha}$
such that if $E\subset [0,1]$ is closed and 
$\lmdim(E)> \frac{1}{2-\alpha}$
then $\var(E,F)=\infty$. 
\end{theorem}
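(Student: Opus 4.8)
The plan is to imitate the construction of part \textbf{II} of Theorem 2.1, but with the building blocks $a\vp_n$ replaced by the correctly-normalized $\Lip_\alpha$ blocks. Concretely, set $F=\sum_{l=1}^\infty a_l\vp_{n_l}$ with $n_{l+1}>2n_l$ increasing fast and $a_l=c_l\, n_l^{-\alpha}$ where $c_l\to 0$ slowly (say $c_l=1/\log n_l$ or similar); by the remark preceding the statement, this choice gives $F\in\Lip_\alpha$ provided $a_l=o(n_l^{-\alpha})$, i.e. $c_l\to 0$. The point of the slack factor $c_l$ is exactly that it lets us keep $F\in\Lip_\alpha$ while the products $a_l n_l^{?}$ below still blow up.

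Next I would fix the separation scale. If $\lmdim(E)>\frac1{2-\alpha}$, then for a suitable $\gamma<2-\alpha$ we have, for infinitely many $n$ (in fact along any sequence growing not too fast), $L_n^*(E)\gtrsim n^{\gamma}$; so $E$ contains $s$-separated monotone sequences $J$ with $s=n^{-1}$ wait — more usefully, scaling down: $E$ contains an $s_k$-separated sequence $J_k$ of length $m_k$ with $s_k=n_k^{-(2-\alpha)+\delta}$ roughly, $m_k\gg n_k$. The right bookkeeping, paralleling subsection 2.4--2.5, is: take $s_k$ a touch larger than $n_k^{-(2-\alpha)}$ and $m_k=20n_k$ (or any fixed multiple of $n_k$), so that $L_{1/s_k}^*(E)\ge m_k$ holds for all large $k$ once $\lmdim(E)>\frac1{2-\alpha}$, because $\frac{\log m_k}{\log(1/s_k)}\to\frac1{2-\alpha}$ from above is dominated by $\lmdim(E)$. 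Then I would invoke the perturbation Lemma of subsection 2.4 with $n=n_k$, $G=G_k=\sum_{l<k}a_l\vp_{n_l}$, $g=g_k=\sum_{l>k}a_l\vp_{n_l}$: the hypotheses $\|g_k\|_\infty<n_k s_k a_k/10$ and $\Lip(G_k)\le n_k a_k/10$ are arranged by choosing $n_{k+1}$ large enough at each stage (lacunarity makes $\|g_k\|_\infty\le\sum_{l>k}a_l\le 2a_{k+1}$ and $\Lip(G_k)\le 2\sum_{l<k}a_l n_l\lesssim a_{k-1}n_{k-1}$, both controllable). The Lemma then yields
\begin{equation*}
\var(E,F)\ge\var(J_k,F)\ge (m_k-2n_k)\,n_k\,a_k\,s_k\gtrsim n_k\cdot n_k\cdot c_k n_k^{-\alpha}\cdot n_k^{-(2-\alpha)}=c_k\cdot n_k^{\,2-\alpha-2+\alpha}\cdot\frac{n_k}{n_k}.
\end{equation*}
I need the exponent of $n_k$ here to come out positive, so the exact power of $n_k$ in $s_k$ must be chosen just below $-(2-\alpha)+\eta$ with $\eta>0$ small; then the product is $\gtrsim c_k n_k^{\eta}\to\infty$, giving $\var(E,F)=\infty$.

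The main obstacle is the simultaneous bookkeeping of three competing demands on $a_k$ and $n_k$: (1) $a_k=o(n_k^{-\alpha})$ so that $F\in\Lip_\alpha$; (2) the perturbation-lemma hypotheses, which force $n_{k+1}$ huge relative to everything chosen so far; and (3) the final lower bound $n_k^2 a_k s_k\to\infty$, which — once $s_k$ is pinned near $n_k^{-(2-\alpha)}$ by the requirement that $E$ with $\lmdim>\frac1{2-\alpha}$ actually contain the sequences $J_k$ — reads $c_k n_k^{\text{(small positive)}}\to\infty$ and is therefore automatically satisfied with room to spare. So the delicate point is purely choosing the separation exponent for $s_k$ strictly between $-(2-\alpha)$ and $-\frac1{\lmdim(E)}$ uniformly; since $\lmdim(E)>\frac1{2-\alpha}$ gives $\frac1{\lmdim(E)}<2-\alpha$, such an exponent exists, but it depends on $E$, which is fine because the sequences $\{a_k\},\{n_k\}$ are chosen first and the estimate $\frac{\log m_k}{\log(1/s_k)}\to\frac1{2-\alpha}$ makes the containment $J_k\subset E$ kick in for all $k>k(E)$. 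The recursive construction of $\{a_k\},\{n_k\}$ is then exactly as at the end of subsection 2.5: start with $a_1,n_1$ arbitrary, set $a_{k+1}=\tfrac1{20}a_k n_k^{-1}$ (or whatever makes $\sum_{l>k}a_l<2a_k$ and keeps $a_k=o(n_k^{-\alpha})$ — here one should instead tie $a_k$ directly to $n_k$ via $a_k=n_k^{-\alpha}/\log n_k$ and only let $n_{k+1}$ be the free large parameter), and at each step take $n_{k+1}$ large enough to validate the two perturbation hypotheses. This completes the proof.
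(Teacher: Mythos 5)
Your overall strategy is the paper's: the same lacunary sum $F=\sum a_k\vp_{n_k}$ with $a_k$ tied to $n_k^{-\alpha}$, the perturbation lemma of subsection 2.4 applied with $G=G_k$, $g=g_k$, and separated sequences extracted from $E$. But the one point you yourself flag as delicate --- the choice of the separation scale $s_k$ --- is exactly where the argument breaks, and your resolution of it is incoherent. If you fix $s_k=n_k^{-(2-\alpha)+\eta}$ with a constant $\eta>0$, then $\frac{\log m_k}{\log(1/s_k)}\to\frac{1}{2-\alpha-\eta}>\frac{1}{2-\alpha}$, so you can only guarantee that $E$ contains the sequences $J_k$ when $\lmdim(E)>\frac{1}{2-\alpha-\eta}$: this proves a strictly weaker theorem. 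If instead you let the exponent depend on $E$, you must check that the hypothesis $\norm{g_k}_\infty<n_k s_k a_k/10$ of the perturbation lemma --- which is encoded in the recursive choice of $a_{k+1}$ \emph{before} $E$ is given --- still holds for the $E$-dependent $s_k$; this can be arranged by tying the smallness condition to the worst case $s_k\ge n_k^{\alpha-2}$, but you never say so, and your assertion that $\frac{\log m_k}{\log(1/s_k)}\to\frac{1}{2-\alpha}$ is simply false for any fixed $\eta>0$. The paper's resolution is a single clean choice you should adopt: take $a_k=n_k^{-\alpha}$ exactly (the criterion in the preceding remark is $a_k=\osta{n_k^{-\alpha}}$, big-$O$, so no factor $c_k\to0$ is needed) and
\begin{equation*}
s_k=n_k^{\alpha-2}\log n_k,
\end{equation*}
so that $n_k^{2}a_ks_k=\log n_k\to\infty$ while $\log(1/s_k)/\log n_k\to 2-\alpha$, whence every closed $E$ with $\lmdim(E)>\frac{1}{2-\alpha}$ contains $s_k$-separated sequences of length $20\,n_k$ for all $k>k(E)$. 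This is precisely the same logarithmic trick already used in the proof of part \textbf{II} of Theorem 2.1, where $s_k=n_k^{-2}\log n_k$.

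A second, smaller defect: your concrete suggestion $c_k=1/\log n_k$ is self-defeating. Once the logarithmic gain in $s_k$ is in place, the lower bound on the variation becomes $c_k\log n_k$, which is bounded for that choice of $c_k$; and in your fixed-$\eta$ version the factor $c_k$ is harmless but, as explained above, the statement proved is the weaker one. Drop $c_k$ entirely and the bookkeeping collapses to the paper's: set $a_{k+1}=n_{k+1}^{-\alpha}$ and choose $n_{k+1}$ large enough, at each stage, to satisfy the two perturbation-lemma hypotheses with $s=n_k^{\alpha-2}\log n_k$.
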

\begin{proof}
As in the example above 
define $F=\sum a_{k}\vp_{n_{k}}$ where now
 $n_{k}={a_{k}^{-1/\alpha}}$. 
If $\lmdim(E)> \frac{1}{2-\alpha}$, and we set
  $s_{k}=n_{k}^{\alpha-2}\log n_{k}$,
 then
 $E$ contains  $s_{k}$-separated sequences $J'_{k}$ of length 
 $m_{k}>20 n_{k}$, and $\var(E,F)=\infty$ since for every $k$,
\begin{equation}
\var(E,F)\ge \var(J'_{k},F)\ge n_{k}^{2}a_{k}s_{k}=\log n_{k}.
\end{equation}\save
\end{proof}

\bfit {Question:} Is the result best possible: does every 
$f\in \Lip_{\alpha}$ have bounded variation on 
some set of dimension 
$c=\frac{1}{2-\alpha}$?

\end{document}